\documentclass[10pt]{amsart}

\usepackage[utf8]{inputenc}
\usepackage[T1]{fontenc}
\usepackage[english]{babel}
\usepackage[autostyle=true]{csquotes}
\usepackage{latexsym, amssymb, amsmath}
\usepackage{nameref,hyperref,cleveref}
\usepackage{amsthm}
\usepackage{enumerate}

\usepackage{url}
\usepackage{float}
\usepackage{caption}
\usepackage{subcaption}
\usepackage{tikz-cd}
\usepackage{color}  
\usepackage{hyperref}
\usepackage{booktabs}
\usepackage{array}
\usepackage[symbol]{footmisc}
\hypersetup{%
colorlinks=true,
linkcolor=black,
filecolor=magenta,      
urlcolor=cyan,
linktoc=all,    
}
\usepackage[ruled,vlined]{algorithm2e}
\usepackage{graphicx}
\graphicspath{{./images/}}
\usepackage{amscd} 
\usepackage{tikz}
\usepackage{pgfplots}
\pgfplotsset{compat=1.15}
\usepackage{mathrsfs}
\usetikzlibrary{arrows}

\newtheorem{theorem}{Theorem}[section]
\newtheorem{lemma}[theorem]{Lemma}
\newtheorem{corollary}[theorem]{Corollary}
\newtheorem{proposition}[theorem]{Proposition}

\theoremstyle{definition}
\newtheorem{definition}[theorem]{Definition}
\newtheorem{remark}{Remark}[section]
\newtheorem{example}{Example}[theorem]
\newtheorem{problem}[theorem]{Problem}

\DeclareMathOperator{\STR}{STR}
\DeclareMathOperator{\rSTR}{rSTR}

\usepackage[%
backend=bibtex,%
style=numeric-comp,%
sorting=nyt,%
hyperref,%
maxcitenames=4, mincitenames=4,
maxbibnames=99, minbibnames=99 %
]{biblatex}

\AtBeginBibliography{}

\addbibresource{Galici_Nagy_Bol_Loops.bib}

\begin{document}

\title[An extension formula for right Bol loops]{An extension formula for right Bol loops arising from Bol reflections}

\author{Mario Galici}
\address{Dipartimento di Matematica e Informatica, Università degli Studi di Palermo,
Italy}
\email{mario.galici@unipa.it}

\author{Gábor P.\ Nagy}
\address{Bolyai Institute, University of Szeged, 6725 Szeged, Aradi v\'ertan\'uk tere 1, Hungary and HUN-REN-ELTE Geometric and Algebraic Combinatorics Research Group, Eotv\"os Lor\'and University, 1117 Budapest, P\'azm\'any s. 1/c, Hungary}
\email{nagyg@math.u-szeged.hu}

\begin{abstract}
We study a new extension formula for right Bol loops. We prove the necessary or sufficient conditions for the extension to be right Bol. We describe the most important invariants: right multiplication group, nuclei, and center. We show that the core is an involutory quandle which is the disjoint union of two isomorphic involutory quandles. We also derive further results on the structure group of the core of the extension.
\end{abstract}

\subjclass[2020]{20N05, 51E14}

\keywords{Loops, nets}

\maketitle

\section{Introduction}

A \emph{quasigroup} is a set $L$ endowed with a binary operation $x\cdot y$ such that equations $a\cdot x=b$, $y\cdot a=b$ have unique solutions for $x,y$. Solutions are denoted by divisions on the left and the right $x=a\backslash b$, $y=b/a$. \textit{Loops} are quasigroups with a unit element $1$. The multiplication sign is often ignored; $(x\cdot y)\cdot z$ is written as $xy\cdot z$. The \emph{left} and \emph{right multiplication maps} $L_a:x\mapsto ax$, $R_a:x\mapsto xa$ are invertible maps of $L$ to itself. A loop is said to be a \emph{right Bol} loop if it satisfies the following identity for all $x,y,z\in L$:
\begin{equation}\label{eq:rightbol}
((xy)z)y=x((yz)y).
\end{equation}
Equivalently, $R_yR_zR_y=R_{(yz) y}$ for every $y,z\in L$. In a right Bol loop, the left and right inverses $x\backslash 1$ and $1/x$ of any element $x$ coincide; we call it the \emph{inverse} of $x$, denoted by $x^{-1}$. Furthermore, any right Bol loop is power associative, meaning that every element generates a cyclic group. Furthermore, $R_x^{-1}=R_{x^{-1}}$, that is, $(xy)y^{-1}=x$ for all $x,y$. If a loop satisfies the right Bol property \eqref{eq:rightbol} and its opposite $x(y(xz)) = (x(yx))z$, then it is called a \emph{Moufang loop.} Moufang loops are \emph{diassociative,} that is, any two elements generate an associative subloop. In particular, the inverse map is an antiautomorphism: $(xy)^{-1}=y^{-1}x^{-1}$. However, the so-called \textit{automorph inverse property}
\begin{align} \label{eq:AIP}
(xy)^{-1}=x^{-1}y^{-1}
\end{align}
plays an important role for non-Moufang right Bol loops. Loops satisfying \eqref{eq:AIP} are called \textit{AIP} loops. Finally, a loop is said to be \emph{right conjugacy closed} if $R_xR_yR_x^{-1}=R_{xy/x}$ holds for all $x,y,z$. Note that if a right Bol loop has central squares then it is right conjugacy closed, see \cite[Theorem 1.4.4]{nagy_strambach_1994}.

The \textit{nuclei} of a loop are defined as follows:
\begin{align*}
N_\lambda &= \{ n\in L \mid (nx)y=n(xy) \;\text{for all $x,y\in L$}\} && \text{(left nucleus)} \\
N_\mu &= \{ n\in L \mid (xn)y=x(ny) \;\text{for all $x,y\in L$}\} && \text{(middle nucleus)} \\
N_\rho &= \{ n\in L \mid (xy)n=x(yn) \;\text{for all $x,y\in L$}\} && \text{(right nucleus)}
\end{align*}
The \textit{commutant} of a loop $L$ is 
\[C(L) = \{z\in L \mid xz=zx \text{ for all $x\in L$}\}.\]
The \textit{center} of $L$ is the intersection of the commutant and the nuclei:
\[Z(L) = C(L) \cap N_\lambda \cap N_\mu \cap N_\rho. \]
The center is always a normal subloop in $L$. In right Bol loops, the right and middle nuclei coincide, and in Moufang loops, they coincide with the left nucleus as well. In groups, the commutant is the same as the center, and hence normal. In Moufang loops, the commutant is a subloop, but not necessarily normal (\cite{GrishkovZavarnitsine}). There are infinite classes of right Bol loops, in which the commutant is not even a subloop (\cite{CommutantBolLoop}).

This paper deals with a new extension formula for loops. Let $(L,\cdot)$ be a loop, and let 
\begin{align*}
\mathcal{T}&=\{t_a \mid a \in L\}, \\
\mathcal{V}&=\{v_a \mid a \in L\}, 
\end{align*}
be two disjoint copies of $L$. Define a product on $\widetilde{L}=\mathcal{T} \cup \mathcal{V}$ by
\begin{equation} \label{eq:extformula}
t_a\cdot t_b= t_{ab}, \quad  t_a \cdot v_b = v_{ab}, \quad v_a \cdot t_b = v_{ab^{-1}}, \quad v_a \cdot v_b = t_{ab^{-1}}.
\end{equation}
This makes $(\widetilde{L},\cdot)$ into a loop with unit $t_1$, and $\mathcal{T}$ is a normal subloop of index $2$, isomorphic to $L$. We will prove:

\begin{theorem} \label{thm:Bolcond}
$\widetilde{L}$ is a right Bol loop if and only if $L$ is a right Bol loop with $x^2\in Z(L)$ for every $x\in L$. Moreover, the following are equivalent:
\begin{enumerate}[(i)]
\item $\widetilde{L}$ is Moufang.
\item $\widetilde{L}$ is associative.
\item $L$ is an abelian group. 
\end{enumerate}
\end{theorem}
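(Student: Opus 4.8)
The plan is to coordinatize $\widetilde L$ and reduce the right Bol law on $\widetilde L$ to identities inside $L$. Writing an element of $\widetilde L$ as a pair $(i,a)$ with $i\in\mathbb Z_2$ and $a\in L$, where $(0,a)=t_a$, $(1,a)=v_a$, and setting $f_0=\mathrm{id}_L$ and $f_1=J$ for the inversion map $J(x)=x^{-1}$, formula~\eqref{eq:extformula} becomes $(i,a)(j,b)=(i+j,\;a\,f_i(b))$, with $f_if_j=f_{i+j}$ since $J^2=\mathrm{id}$. Expanding $((xy)z)y$ and $x((yz)y)$ for $x=(p,a)$, $y=(q,b)$, $z=(r,c)$, the first coordinates are always $p+r$, and the second‑coordinate equality is a single identity in $L$ depending only on the parities of $p,q,r$. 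First I would carry out this expansion and show — using the substitution $c\mapsto c^{-1}$ (a bijection of $L$, which we may assume once $\mathcal T\cong L$ has been seen to be right Bol) to merge cases — that the eight instances collapse to four. The two instances with $p=0$ and $q+r$ even are exactly the right Bol identity of $L$; the two with $p=1$ and $q+r$ even reduce to $((ab^{-1})c^{-1})b^{-1}=a\,((bc)b)^{-1}$, which holds in \emph{every} right Bol loop because $R_{(bc)b}=R_bR_cR_b$ forces $R_{((bc)b)^{-1}}=R_{(bc)b}^{-1}=R_{b^{-1}}R_{c^{-1}}R_{b^{-1}}=R_{(b^{-1}c^{-1})b^{-1}}$; and the remaining two families give the conditions
\begin{align*}
&\text{(A)}\quad ((ab)c)b^{-1}=a\bigl((bc)b^{-1}\bigr),\\
&\text{(B)}\quad ((ab^{-1})c^{-1})b=a\bigl((bc)b^{-1}\bigr)^{-1},
\end{align*}
required for all $a,b,c\in L$. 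Thus $\widetilde L$ is right Bol iff $L$ is right Bol and (A) and (B) hold.

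Next I would translate (A) and (B) into statements about $L$, using that in a right Bol loop $R_u^{-1}=R_{u^{-1}}$, right alternativity $R_u^2=R_{u^2}$ (from~\eqref{eq:rightbol} with $z=1$), power‑associativity, and $N_\mu=N_\rho$. Rewriting (A) as $R_{b^{-1}}R_cR_b=R_{(bc)b^{-1}}$ and computing $R_{b^{-1}}R_cR_b=R_{b^{-2}}R_{(bc)b}$ from $R_bR_cR_b=R_{(bc)b}$, and noting $((bc)b)b^{-2}=(bc)b^{-1}$, one gets that (A) is equivalent to $R_{b^{-2}}R_d=R_{db^{-2}}$ for all $b,d$, i.e. to $x^2\in N_\rho=N_\mu$ for all $x$. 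Assuming (A), the same computation gives $R_{(bc)b^{-1}}=R_{b^{-1}}R_cR_b$, hence $R_{((bc)b^{-1})^{-1}}=R_{b^{-1}}R_{c^{-1}}R_b$; substituting this into (B), written as $R_bR_{c^{-1}}R_{b^{-1}}=R_{((bc)b^{-1})^{-1}}$, and cancelling, (B) becomes $R_{b^2}R_d=R_dR_{b^2}$ for all $b,d$, which — because $b^2\in N_\mu=N_\rho$ — holds exactly when $b^2$ is central. It remains to observe the elementary fact that in a right Bol loop $C(L)\cap N_\mu\subseteq Z(L)$: if $n\in C(L)\cap N_\rho$ then $n(xy)=(xy)n=x(yn)=x(ny)=(xn)y=(nx)y$, so $n\in N_\lambda$ as well. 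Putting these together, $\widetilde L$ is right Bol iff $L$ is right Bol and $x^2\in N_\mu\cap C(L)=Z(L)$ for every $x$; as every step is an equivalence, the ``only if'' and ``if'' directions come out at once.

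For the three equivalences: (iii)$\Rightarrow$(ii) is immediate, since if $L$ is an abelian group then $J$ is an automorphism and $(i,a)(j,b)=(i+j,a\,f_i(b))$ is exactly the multiplication of the semidirect product $L\rtimes\mathbb Z_2$ with $\mathbb Z_2$ acting by inversion, so $\widetilde L$ is a group. (ii)$\Rightarrow$(i) holds because every group is Moufang. For (i)$\Rightarrow$(iii): a Moufang loop is diassociative, so its inverse map is an antiautomorphism; applying $(uv)^{-1}=v^{-1}u^{-1}$ to $u=t_a$ and $v=v_b$, and using $v_c^{-1}=v_c$ (as $v_c^2=t_1$), yields $v_{ab}=v_{ba}$, so $L$ is commutative. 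Also $\mathcal T\cong L$ is a subloop of the Moufang loop $\widetilde L$ and hence Moufang, and by the first part $x^2\in Z(L)$ for every $x$. Thus $L$ is a commutative Moufang loop with central squares, and here I would invoke the classical structure theory of commutative Moufang loops: associators satisfy $(x,y,z)^3=1$ and $x\mapsto(x,y,z)$ is an endomorphism, whence $(x,y,z)^2=(x^2,y,z)=1$ because $x^2\in Z(L)\subseteq N_\lambda$. Hence every associator is trivial, $L$ is an associative commutative loop, i.e. an abelian group.

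The main obstacle is the operator bookkeeping in the second step: one must be careful which consequences of the right Bol law are legitimately available at each stage — $R_u^{-1}=R_{u^{-1}}$ and $R_u^2=R_{u^2}$ may be used freely, but the nuclear simplifications $R_{b^{-2}}R_d=R_{db^{-2}}$ and the cancellation reducing (B) to the commuting condition only become valid after (A), i.e. $x^2\in N_\mu$, has been secured — and about the direction in which right translations compose. Once the right Bol law on $\widetilde L$ is reduced to (A)$+$(B) and these are identified with $x^2\in N_\mu$ and $x^2\in C(L)$, everything else, including the second part, is short.
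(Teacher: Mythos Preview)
Your argument is correct, and for the first assertion it takes a genuinely different route from the paper. The paper realises $\widetilde L$ geometrically: it shows that the set $\Sigma=\{\sigma_d,\sigma_1\sigma_d\mid d\in L\}$ of Bol reflections and their products is sharply transitive on $\mathcal T\cup\mathcal V$, identifies $\widetilde L$ with the loop of this sharply transitive set, and then checks by direct computation of $\sigma_a\sigma_b\sigma_a$ and $\sigma_1\sigma_a\sigma_b\sigma_1\sigma_a$ when $\Sigma$ is a twisted subgroup. This yields the intermediate characterisation ``$\widetilde L$ right Bol $\iff$ $L$ is right conjugacy closed right Bol with $ab\cdot a^{-1}=a^{-1}b\cdot a$'', which is then converted to the central-squares condition. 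Your coordinatised expansion bypasses the $3$-net and folder machinery entirely; your conditions (A) and (B), identified with $x^2\in N_\mu=N_\rho$ and $x^2\in C(L)$ respectively, are a clean alternative to the paper's RCC intermediate step. What the paper's route buys is that $\Gamma=\langle\sigma_d\rangle$ is visibly the right multiplication group of $\widetilde L$, which feeds into the later sections; what your route buys is an entirely elementary proof needing nothing beyond $R_u^{-1}=R_{u^{-1}}$ and $R_u^2=R_{u^2}$. (A cosmetic slip: your rewriting of (A) should read $R_bR_cR_{b^{-1}}=R_{(bc)b^{-1}}$, not $R_{b^{-1}}R_cR_b$; the subsequent computation and conclusion are unaffected.)

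For (i)$\Rightarrow$(iii) both arguments first derive commutativity of $L$ from the antiautomorphic inverse property, exactly as you do. The paper then notes that a right conjugacy closed Moufang loop is an extra loop and cites that a commutative extra loop is abelian. Your appeal to commutative Moufang structure theory reaches the same endpoint, but the assertion that $x\mapsto(x,y,z)$ is an \emph{endomorphism} of a general commutative Moufang loop is not literally correct; that linearity only holds modulo higher associators. You can finish without any citation: from the Moufang identity $x(y(xz))=((xy)x)z$, commutativity and left alternativity give $(xy)x=x^2y$, so $x(y(xz))=(x^2y)z=x^2(yz)=x(x(yz))$ using $x^2\in Z(L)$; cancelling $x$ yields $y(xz)=x(yz)$, and then $(ab)c=c(ab)=a(cb)=a(bc)$ by two applications of this together with commutativity.
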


The class of right Bol loops with central squares contains the class of Bol loops of exponent $2$. The latter is a very rich class, containing simple proper finite Bol loops of exponent $2$ (\cite{Aschbacher}, \cite{Nagyexp2}). Right conjugacy closed Moufang loops are also called \textit{extra loops} (\cite{Fenyves1}, \cite{Fenyves2}). In extra loops, all squares are in the nucleus. An important subclass is the class of \textit{code loops,} which are related to doubly even binary linear codes (\cite{NagyCodeLoops}, \cite{Griess}, \cite{Hsu}). Moreover, Bol loops of exponent 4 with central squares are important in our context because, in this case, the process of constructing the extension can be iterated. Among these loops, we find code loops and \emph{small Frattini Bol 2-loops} \cite{nagysmallfrattini}.

We will prove:
\begin{theorem} \label{thm:nuclei}
Let $L$ be a right Bol loop with central squares. 
\begin{enumerate}[(i)]
\item The right nucleus of the extension $\widetilde{L}$ is
\[N_\rho(\widetilde{L})=\{t_z, \ v_z \mid z\in Z(L) \}.\]
\item For the left nucleus of $\widetilde{L}$ it holds that 
\[N_\lambda(\widetilde{L})\cap \mathcal{T}=\{t_n \mid n\in N_\lambda(L) \}\cong N_\lambda(L).\]
Furthermore, if $L$ is a non-abelian group, then 
\begin{equation}
N_\lambda(\widetilde{L})=\mathcal{T}\cong L;
\end{equation}
If $L$ is an AIP loop, then
\begin{equation}
N_\lambda(\widetilde{L})=\{t_n, \ v_n \mid n\in N_\lambda(L) \}.
\end{equation}
\end{enumerate}
\end{theorem}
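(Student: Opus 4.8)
The plan is to compute both nuclei by the systematic route: write down the associativity condition defining membership in $N_\rho(\widetilde{L})$ (resp. $N_\lambda(\widetilde{L})$), expand it with the four product rules \eqref{eq:extformula} for every combination of the types $t_a$, $v_a$ of the two free arguments, and collect the resulting identities in $L$. For (i), fix $n\in\{t_c,v_c\}$ and impose $(xy)n=x(yn)$. The cases with $x\in\mathcal T$ (and $y$ of either type) return $(ab)c=a(bc)$, or the same with $c^{-1}$, i.e. $c\in N_\rho(L)$. The cases with $x\in\mathcal V$ both reduce — the $y\in\mathcal V$ one becoming the $y\in\mathcal T$ one after replacing $c$ by $c^{-1}$ — to the single identity $(ab^{-1})c^{-1}=a(bc)^{-1}$, and (this is the point making $\mathcal T$ and $\mathcal V$ behave alike) exactly the same identity comes out whether $n=t_c$ or $n=v_c$. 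Putting $a=1$ extracts $(bc)^{-1}=b^{-1}c^{-1}$ for all $b$, and conversely this relation together with $c\in N_\rho(L)$ restores the full identity. Hence $t_c$ (equivalently $v_c$) lies in $N_\rho(\widetilde L)$ iff $c\in N_\rho(L)$ and $(bc)^{-1}=b^{-1}c^{-1}$ for every $b\in L$.

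The heart of (i) — and the step I expect to be the main obstacle — is to show these two conditions together are equivalent to $c\in Z(L)$; the difficulty is manufacturing full centrality from the rather weak ``automorph-inverse at $c$'' relation. For the nontrivial direction, $(bc)^{-1}=b^{-1}c^{-1}$ with $c^{-1}\in N_\rho(L)$ gives $((bc)b^{-1})c^{-1}=1$; applying right division by $c^{-1}$ (i.e. $R_c$, using $R_c^{-1}=R_{c^{-1}}$) yields $(bc)b^{-1}=c$, and applying $R_b$ then gives $bc=cb$, so $c\in C(L)$. Since $N_\rho=N_\mu$ in a right Bol loop, $c$ now commutes with everything and lies in $N_\mu(L)\cap N_\rho(L)$, whence the short computation $(cx)y=(xc)y=x(cy)=x(yc)=(xy)c=c(xy)$ places $c\in N_\lambda(L)$ too; thus $c\in Z(L)$. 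The converse is the routine check $(bc)(b^{-1}c^{-1})=((bc)b^{-1})c^{-1}=((cb)b^{-1})c^{-1}=cc^{-1}=1$ using $c$ central, $c^{-1}\in N_\rho(L)$, and $(xb)b^{-1}=x$. This yields $N_\rho(\widetilde L)=\{t_z,v_z\mid z\in Z(L)\}$.

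For (ii), imposing $(nx)y=n(xy)$ with $n=t_c$: all four type-combinations collapse — noting that wherever a factor $b^{-1}$ appears it still ranges over all of $L$ — to the single identity $(ca)b=c(ab)$, i.e. $c\in N_\lambda(L)$; since $a\mapsto t_a$ is an isomorphism $L\to\mathcal T$, this is precisely $N_\lambda(\widetilde L)\cap\mathcal T=\{t_n\mid n\in N_\lambda(L)\}\cong N_\lambda(L)$. For $n=v_c$ the four combinations collapse likewise, after a substitution $b\mapsto b^{-1}$, to the one identity $(ca^{-1})b^{-1}=c(ab)^{-1}$ for all $a,b\in L$. Now specialize. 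If $L$ is a group, associativity and $(ab)^{-1}=b^{-1}a^{-1}$ rewrite this identity as $a^{-1}b^{-1}=b^{-1}a^{-1}$ for all $a,b$, forcing $L$ abelian; hence when $L$ is a non-abelian group no $v_c$ qualifies, and since $\mathcal T\subseteq N_\lambda(\widetilde L)$ we get $N_\lambda(\widetilde L)=\mathcal T\cong L$. If instead $L$ is AIP and $c\in N_\lambda(L)$, then $c(ab)^{-1}=c(a^{-1}b^{-1})=(ca^{-1})b^{-1}$, so the identity holds and $v_c\in N_\lambda(\widetilde L)$; conversely $c=1$ shows $v_1\in N_\lambda(\widetilde L)$, and then for any $v_c\in N_\lambda(\widetilde L)$ the element $v_cv_1=t_c$ lies in $N_\lambda(\widetilde L)\cap\mathcal T$, so $c\in N_\lambda(L)$ by the description already proved. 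Therefore $N_\lambda(\widetilde L)=\{t_n,v_n\mid n\in N_\lambda(L)\}$, which completes (ii).
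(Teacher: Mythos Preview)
Your proof is correct and follows essentially the same route as the paper: a case-by-case expansion of the associator conditions over the four type combinations, reduction to identities in $L$, and then the identification of those identities with $c\in Z(L)$ (for part~(i)) and with $c\in N_\lambda(L)$ or the single identity $(ca^{-1})b^{-1}=c(ab)^{-1}$ (for part~(ii)). The only notable deviations are cosmetic: in (i) the paper reaches commutativity via $(bz^{-1})^{-1}=zb^{-1}$ for $z\in N_\rho(L)$, whereas you reach $bc=cb$ from $((bc)b^{-1})c^{-1}=1$ using $R_{c^{-1}}=R_c^{-1}$ and the right inverse property; and in the AIP converse of (ii) the paper rewrites the identity directly into $n\in N_\lambda(L)$, while you exploit that $N_\lambda(\widetilde L)$ is a subgroup to get $t_c=v_cv_1\in N_\lambda(\widetilde L)\cap\mathcal T$, which is a neat shortcut.
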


\bigskip

The \textit{core} of a right Bol loop is the binary operation 
\[x+y=(yx^{-1})y.\]
It satisfies the identities
\begin{align}
x+x&=x  && \text{(idempotent)} \\ 
(x+y)+y&=x  &&\text{(involutorial)}\\ 
(x+y)+z&=(x+z)+(y+z)  &&\text{(right distributive)}
\end{align}
A binary structure with these properties is also called an \textit{involutorial quandle.} Quandles are algebraic structures associated with knots: given a finite quandle and a cocycle, one can construct a knot invariant. Involutorial quandles are also called abstract symmetric spaces (in the sense of Loos \cite{Loos}).

Not all involutorial quandles happen to be the core of some right Bol loop. For example, if the core of $(L,\cdot)$ is a trivial involutorial quandle, that is $x+y=x$ for all $x,y$, then $L$ has to be an elementary abelian $2$-group. Hence, a trivial involutorial quandle whose order is not a $2$-power cannot be the core of a right Bol loop. 

\begin{problem} \label{prob:givencore}
Let $(Q,\triangleleft)$ be an involutorial quandle. Find necessary or sufficient conditions for the existence of a right Bol loop whose core is isomorphic to $(Q,\triangleleft)$. 
\end{problem}

This problem is settled for finite involutorial quandles of odd order: they are the core of a right Bol loop if and only if they are quasigroups, see \cite[(6.14) Theorem]{Kiechle}. Notice that an involutorial quandle is a quasigroup if and only if its left multiplication maps $x\mapsto a+x$ are invertible. 

Let us call the $(Q,\triangleleft)$ a \textit{RB-quandle,} if it is the core of some right Bol loop $(Q,\cdot)$. Our long-term goal is to study RB-quandles which are disjoint unions of two proper RB-subquandles. The right Bol loop extension $\widetilde{L}$ has the following relevant property.

\begin{theorem} \label{thm:core}
Let $L$ be a right Bol loop with central squares. The core of $\widetilde{L}$ decomposes to the disjoint union of two subquandles $\mathcal{T}$ and $\mathcal{V}$, both isomorphic to the core of $L$. 
\end{theorem}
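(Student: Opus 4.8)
The plan is to compute the core $x+y=(yx^{-1})y$ of $\widetilde L$ directly from the extension formula \eqref{eq:extformula}. Observe first that $\widetilde L$ is a right Bol loop by Theorem~\ref{thm:Bolcond}, so its core is a well-defined involutory quandle; this is the only place where the hypothesis that the squares of $L$ are central enters, the block computations below relying solely on \eqref{eq:extformula}. Reading off inverses from \eqref{eq:extformula}, one has $t_a^{-1}=t_{a^{-1}}$, and $v_a^{-1}=v_a$, since $v_a\cdot v_a=t_{aa^{-1}}=t_1$ and inverses in a right Bol loop are two-sided.

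Next I would evaluate the core on the two homogeneous blocks $\mathcal T$ and $\mathcal V$. For $a,b\in L$,
\[
t_a+t_b=(t_b\cdot t_{a^{-1}})\cdot t_b=t_{ba^{-1}}\cdot t_b=t_{(ba^{-1})b},
\]
\[
v_a+v_b=(v_b\cdot v_a)\cdot v_b=t_{ba^{-1}}\cdot v_b=v_{(ba^{-1})b},
\]
where in the second line we used $v_a^{-1}=v_a$, $v_b\cdot v_a=t_{ba^{-1}}$ and $t_c\cdot v_d=v_{cd}$. Since $(ba^{-1})b$ is exactly the core product of $a$ and $b$ computed in $L$, these formulas show that $\mathcal T$ and $\mathcal V$ are each closed under the core operation of $\widetilde L$, hence are subquandles, and that $t_a\mapsto a$ and $v_a\mapsto a$ are isomorphisms from $\mathcal T$, respectively $\mathcal V$, onto the core of $L$. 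As $\widetilde L=\mathcal T\cup\mathcal V$ is a disjoint union of sets, this yields the theorem.

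There is no serious obstacle here: the argument is a two-line computation once the inverses are identified, the only delicate point being the bookkeeping of which of the four cases of \eqref{eq:extformula} is invoked at each step, together with the observation that the hypothesis on $L$ is needed only to apply Theorem~\ref{thm:Bolcond} so that ``the core of $\widetilde L$'' is meaningful. If desired, one can additionally record the mixed core products $t_a+v_b=t_{(ba)b^{-1}}$ and $v_a+t_b=v_{(ba)b^{-1}}$, which exhibit the full block structure of the core of $\widetilde L$, but these are not required for the statement.
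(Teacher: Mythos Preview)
Your proof is correct and follows essentially the same approach as the paper: both arguments compute the core on the $\mathcal T$- and $\mathcal V$-blocks directly from the extension formula and observe that each block reproduces the core of $L$. Your version is slightly more explicit about inverses and about where the central-squares hypothesis enters, and your optional mixed products $t_a+v_b$, $v_a+t_b$ agree with what the paper records immediately after its proof.
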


The structure of the paper is as follows. In Section~\ref{sec:prelim}, we give the necessary definitions and properties. Our focus is on the geometric and group theoretical tools, which enable us to use Aschbacher's efficient Bol loop folder method to study the extension. In Section~\ref{sec:extprops}, we prove Theorem~\ref{thm:Bolcond} on the structure of the extension. In Section~\ref{sec:nuclei}, we prove Theorem~\ref{thm:nuclei} on the center and nuclei. Finally, in Section~\ref{sec:extcore}, we prove Theorem~\ref{thm:core} and derive further results on the structure group of the core of the extension. 

\bigskip

\section{Preliminaries} \label{sec:prelim}

\subsection{Loop folders}

In \cite{Aschbacher}, Aschbacher studied the correspondence between loops and certain triples of group theoretic data in order to study finite loops using techniques from finite group theory. 

Let $L$ be a loop, $K=\{R_x\mid x\in L\}$ be the set of right translations of $L$, $G=\langle K\rangle$ be the right multiplication group of $L$ and $H=G_1$ the stabilizer of the unit $1$ of $L$. The triple $\epsilon(L)=(G,H,K)$ is called the \emph{envelope} of the loop $L$. It is known that a loop $L$ with envelope $\epsilon(L)$ is a Bol loop if and only if $K$ is a twisted subgroup of $G$. 

\begin{definition}
A \emph{loop folder} is a triple $\xi=(G,H,K)$ where $G$ is a group, $H$ is a subgroup of $G$ and $K$ is a subset of $G$ containing $1$ such that $K$ is a set of coset representatives for $G/H^g=\{H^gx \mid x\in G\}$ for each $g\in G$.
\end{definition}

A morphism $\xi\to\xi'$ of loop folders $\xi=(G,H,K)$ and $\xi'=(G',H',K')$ is a group homomorphism $\pi\colon G\to G'$ such that $H^\pi\leq H'$ and $K^\pi\subseteq K'$. 

A folder is said \emph{faithful} if $G$ acts faithfully on $\{Hx\mid x\in G\}$. A \textit{loop envelope} is a loop folder $(G,H,K)$ such that $G=\langle K \rangle$. If $L$ is a loop, then $\epsilon(L)=(G,H,K)$ is a faithful loop envelope. 

Let $\xi=(G,H,K)$ be a loop folder and define a binary operation $\ast$ on $K$ by taking $a\ast b$ to be the unique element in $K$ such that $H(a\ast b)=H(ab)$. Then $\ell(\xi)=(K,\ast)$ is a loop with identity the unit $1$ of $G$. The loop $\ell(\xi)$ is called the \emph{loop of the loop folder $\xi$}. For $\pi\colon \xi \to \xi'$ a homomorphism of loop folders, define $\ell(\pi)\colon \ell(\xi) \to \ell(\xi')$ as the restriction of $\pi$ to $K$.

If $L$ is a loop, then $\ell(\epsilon(L))\cong L$. For a faithful loop envelope $\xi$, one has $\epsilon(\ell(\xi))\cong \xi$. 

Let $G$ be a group acting transitively on a set $Q$. The subset $S\subseteq G$ is \textit{sharply transitive set,} if for any $x,y\in Q$ there is a unique $s\in S$ such that $x^s=y$. The set $R(L)=\{R_a\mid a \in L\}$ of the right translations of a loop $L$ is sharply transitive on $L$ and contains the identity $\mathrm{Id}=R_1$. 

\begin{definition}
Let $S\subseteq G$ be a sharply transitive set on $Q$, $1\in S$. Fix an element $e\in Q$. Then $\xi=(G,G_e,S)$ is a loop folder. The associated loop $\ell(\xi)$ will be denoted by $\lambda(G, S,e)$. 
\end{definition}

The operation of $\lambda(G,S,e)$ can be given as follows: $x\ast y := x^{s}$, where $s\in S$ such that $e^s=y$ for every $x,y\in Q$. 

Finally, notice that $\ell(G,H,K)$ is a right Bol loop if and only if for all $a,b\in K$, $a^{-1}, aba \in K$. In this case, $(G,H,K)$ is called a \textit{Bol loop folder.}

\subsection{Moufang loops by Chein extension}

In \cite{chein}, Chein showed a general method of constructing non-associative Moufang loops as extensions of non-abelian groups by the cyclic group of order $2$. In the context of the next theorem, a set of generators is called \emph{minimal} if it contains the smallest number of elements, and not if no proper subset is a set of generators.

\begin{theorem}[Chein]\label{theoremchein}
If $L$ is a non-associative Moufang loop for which every minimal set of generators contains an element of order $2$, then there exists a non-abelian group $G$ and an element $x\in L$ of order $2$ such that each element of $L$ may be expressed in the form $gx^\alpha$, where $g\in G$, $\alpha=0,1$, and the product of two elements of $L$ is given by
\begin{equation*}
\left(g_1x^\delta\right)\left(g_2x^\epsilon\right)=\left( g_1^\nu g_2^\mu \right) x^{\delta+\epsilon}.
\end{equation*}
where $\nu=(-1)^\epsilon$ and $\mu=(-1)^{\delta+\epsilon}$.

Conversely, given any non-abelian group $G$, the loop constructed as indicated above is a non-associative Moufang loop. 
\end{theorem}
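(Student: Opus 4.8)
The statement consists of a construction --- the ``conversely'' clause, that the doubling of an arbitrary non-abelian group is a non-associative Moufang loop --- and a recognition clause, that every non-associative Moufang loop with the stated generating-set property arises in this way. Since the construction is the prototype for the extension formula \eqref{eq:extformula} studied in this paper, I would present it in full and treat the recognition clause more briefly.

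For the construction, set $M(G,2)=G\cup Gx$ with $x^2=1$ and with the displayed product (up to sign conventions this is the classical Chein double). First I would confirm that $M(G,2)$ is a loop: $1_G$ is a two-sided identity, and with either factor fixed the multiplication map restricts, on each of the two $\mathbb Z_2$-graded components, to a translation of $G$ possibly composed with the inversion bijection, hence is a permutation; this yields unique left and right quotients. Next I would prove that $M(G,2)$ satisfies a Moufang identity --- equivalently, both the right Bol identity \eqref{eq:rightbol} and its opposite --- by splitting into the $2^3=8$ cases given by the gradings $(\delta_x,\delta_y,\delta_z)\in\mathbb Z_2^3$ of the three variables. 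The case $(0,0,0)$ is free because $G$ is a subgroup of $M(G,2)$; in each of the remaining seven, expanding both sides via the product rule and simplifying with $x^2=1$ collapses the identity to an equality of words in $G$ that holds by associativity of $G$ alone --- crucially, commutativity of $G$ is never invoked, which is what allows $G$ to be non-abelian. Since a loop satisfying a Moufang identity is diassociative, $M(G,2)$ is then a bona fide Moufang loop. Finally, for non-associativity I would pick $a,b\in G$ with $ab\ne ba$ and observe that a suitable triple product involving $x$ --- say $(a\cdot b)\cdot x$ against $a\cdot(b\cdot x)$ --- evaluates, by the product rule, to two distinct elements of $Gx$, witnessing that $M(G,2)$ is not associative.

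For the recognition clause, the plan is: given a non-associative Moufang loop $L$ in which every minimal generating set contains an involution, produce a non-abelian subgroup $G$ of index $2$ and an involution $x\in L\setminus G$ with $L=G\cup Gx$; then, using that index-two subloops are normal together with $x^2=1$ and $xGx=G$, the Moufang identities pin down each of the four products $g_1x^{\delta}\cdot g_2x^{\epsilon}$ into the displayed shape, and $G$ is forced non-abelian because $L$ is non-associative. The step I expect to be the main obstacle is the existence of such a $G$: one must produce a surjective homomorphism $L\to\mathbb Z_2$ whose kernel is associative, and the generating-set hypothesis is exactly what is meant to make this possible. (Naively taking the subloop generated by a minimal generating set with one involution removed will not do: already in the smallest non-associative Moufang loop such a subloop can have index far larger than $2$.) I would try to locate the homomorphism via the abelianization/associator-quotient of $L$, and then rule out a non-associative kernel by a minimal-counterexample argument on $|L|$ --- the delicate point being to transfer the generating-set hypothesis from $L$ down to $G$ --- after which the remaining verifications are routine.
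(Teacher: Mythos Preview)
The paper does not prove this theorem. Theorem~\ref{theoremchein} is stated as a result of Chein, with a citation to \cite{chein}, and is included only as background so that the new extension formula \eqref{eq:extformula} can be compared with Chein's classical doubling. There is therefore no proof in the paper to set your proposal against.

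On your proposal itself: the plan for the ``converse'' direction is the standard one and is sound --- check the loop axioms componentwise, then verify a Moufang identity by the eightfold case split on the $\mathbb{Z}_2$-grading, reducing each case to associativity in $G$. One small point: your suggested non-associativity witness $(a\cdot b)\cdot x$ versus $a\cdot(b\cdot x)$ does not quite hit commutativity directly under the formula as stated here; the triple $x\cdot(ab)$ versus $(x\cdot a)\cdot b$ is cleaner, since it reduces exactly to $b^{-1}a^{-1}=a^{-1}b^{-1}$.

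For the recognition direction you correctly identify the crux --- producing an associative index-$2$ subloop --- and you are honest that your sketch is incomplete. That step is indeed the whole content of Chein's argument, and the route through an abelianization or associator quotient that you gesture at does not by itself yield associativity of the kernel. Chein's proof proceeds instead by choosing a minimal generating set, removing an involution $u$, and then working carefully with the diassociativity of Moufang loops and the minimality of the generating set to force the subloop $G=\langle L\setminus\{u\}\rangle$ generated by the remaining elements to be a group of index~$2$; transferring the hypothesis down to $G$ is handled by the minimality assumption on the generating set. If you want to complete this direction, that is the argument to reconstruct.
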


The Moufang loop of order $2n$ arising from a group $G$ of order $n$ as in Theorem~\ref{theoremchein} is denoted by $M_{2n}(G,2)$. 

We can express Chein's formula so that it can be compared to \eqref{eq:extformula}. Replacing $g$ with $t_g$ and $gx$ with $v_g$, we have, for every $g,h\in G$
\begin{equation}
t_g t_h= t_{gh}, \quad t_g v_h=v_{g^{-1}h^{-1}}, \quad v_g t_h=v_{gh^{-1}}, \quad v_g v_h= t_{g^{-1}h}. 
\end{equation}
Conversely, \eqref{eq:extformula} can also be reformulated for a direct comparison with Chein's formula. We identify $t_a$ with $a$ and denote $v_1$ with $x$. Then, every $\ell \in \widetilde{L}$ can be written as $ax^\epsilon$, with $\epsilon=0,1$, and the product becomes
\begin{equation}
(ax^\delta)(bx^\epsilon)=(ab^\mu)x^{\delta+\epsilon},
\end{equation}
with $\mu=(-1)^\delta$.

\subsection{Nets and collineations}

This section has the aim of introducing the concept of a \emph{3-net}, specifically of a 3-net associated with a loop, and showing some relations between these two items. For further information on $3$-nets and loops, interested readers can refer to \cite{pflugfelder} or \cite{NagyStrambach}. An incidence structure is a triple $(\mathcal{P},\mathcal{L}, \mathcal{I})$ with $\mathcal{P}$ and $\mathcal{L}$ are sets whose elements are called points and lines, respectively, and $\mathcal{I}$ is a subset of the product $\mathcal{P}\times\mathcal{L}$. A point $P$ and a line $\ell$ are called incident if $(P,\ell)\in \mathcal{I}$.

\begin{definition}\label{3net}
A \emph{3-net} $\mathcal{N}$ is an incidence structure $(\mathcal{P},\mathcal{L}, \in) $ in which a point $P$ and a line $\ell$ are incident if and only if $P\in\ell$, satisfying the following axioms. 
\begin{enumerate}
\item $\mathcal{P}\neq\emptyset$.

\item $\mathcal{L}$ is the union of $3$ families        
(\emph{pencils}), namely the \emph{horizontal}, \emph{vertical} and \emph{transversal} lines, such that:
\begin{enumerate}[(i)]
\item the lines from each pencil partition $\mathcal{P}$;
\item two lines of distinct pencils have a unique point in common.
\end{enumerate}

\item Every line has exactly $n=|\mathcal{P}|$ points. 
\end{enumerate}
\end{definition}

We denote the families of horizontal, vertical, and transversal lines, respectively, by $\mathcal{H}$, $\mathcal{V}$, and $\mathcal{T}$.
A \emph{collineation} of a $3$-net is a bijection on both $\mathcal{P}$ and $\mathcal{L}$ which preserves the incidence relations between points and lines. 

It is well known that any loop $L$ can be associated with a $3$-net $\mathcal{N}(L)$ with the set of points $\mathcal{P}=L\times L$, and the lines given by:
\begin{align*}
\mathcal{H}&=\big\{ h_a=\{(x,y) \mid y=a\} \mid a\in L \big\},\\
\mathcal{V}&=\big\{ v_b= \{(x,y) \mid x=b\} \mid b\in L \big\},\\
\mathcal{T}&=\big\{ t_c=\{(x,y) \mid xy=c\} \mid c\in L \big\}.
\end{align*}
Conversely, to any $3$-net corresponds an isotopy class of loops (cf. \cite{BarlottiStrambach}, p. 10 and \cite{belousovfoundations}, p. 20). 

The most common way to represent the points of the $3$-net $\mathcal{N}(L)$ is to use their usual two coordinates $(a,b)$. However, when computing with collineations, representing points by three coordinates such as $(a,b,ab)$ can be more convenient. In this way, we can keep track of all the three lines containing the given points, which are $h_a$, $v_b$, and $t_{ab}$.

\subsection{Bol reflections}
Let us now fix an arbitrary horizontal line $h_d$. For each point $P=(a,b)\in\mathcal{P}$, consider the vertical line $v_a$ and the transversal line $t_{ab}$ incident in $P$. In particular $v_a$  and $t_{ab}$ intersect $h_d$, respectively, at the points $Q=(a,d)$ and $R=(ab/d,d)$. The intersection between the transversal line through $Q$ and the vertical line through $R$ consists of the point $P'=(ab/d, u)$, with $u\in L$ depending on $a$, $b$ and $d$, and fulfilling the equation
\begin{equation}\label{equ}
(ab)/d \cdot u = ad.
\end{equation}
The following figure provides a visual representation of these points and lines.

\begin{figure}[H]
\label{fig.bolreflection}
\caption{Bol reflection through the line $h_d$}
\begin{center}
\setlength{\unitlength}{0.25mm}
\begin{picture}(100,140)(0,0)
\thicklines

\put(-100,0){\textcolor{blue}{\line(1,0){340}}}
\put(-50,-100){\line(0,1){240}}
\put(-50,80){\line(2,-1){200}}
\put(-50,80){\line(-2,1){50}}
\put(-50,0){\line(2,-1){200}}
\put(-50,0){\line(-2,1){50}}
\put(110,-100){\line(0,1){240}}

\put(-50,0){\circle*{10}}
\put(-50,80){\circle*{10}}
\put(110,0){\circle*{10}}
\put(110,-80){\circle*{10}}

\put(-50,0){\makebox(-70,-10)[t]{$Q=(a,d)$}}
\put(-50,80){\makebox(70,15)[t]{$P=(a,b)$}}
\put(110,0){\makebox(100,20)[t]{$R=(ab/d,\ d)$}}
\put(110,-80){\makebox(100,20)[t]{$P'=(ab/d,\ u)$}}
\put(240,0){\makebox(30,5)[t]{$h_d$}}
\end{picture}
\end{center}
\end{figure}
\vspace{2.5cm}

For any $d\in L$, the map 
\begin{equation*}
\sigma_d\colon (a,b)\mapsto \big((ab)/d, \ u(a,b,d)\big)
\end{equation*}
is called the \emph{Bol reflection} through the line $h_d$. A Bol reflection is a collineation of the $3$-net $\mathcal{N}(L)$ if, and only if, the corresponding element $u$ in the image of a point $P=(a,b)$ does not depend on the first coordinate of $P$. This means that the whole line $h_b$ is mapped onto the line $h_u$.

It is easy to prove that the Bol reflections $\sigma_d$ are collineations of the $3$-net $\mathcal{N}(L)$ exactly when $L$ is a right Bol loop.
In this case, for any $d\in L$, the Bol reflection $\sigma_d$ is given by the map
\begin{equation}\label{bolrefl}
\sigma_d\colon (x,y,xy) \longmapsto \left( xy\cdot d^{-1}, \ dy^{-1} \cdot d, \ xd  \right).
\end{equation}	
Since in a right Bol loop $L$ the property $(ab\cdot a)^{-1}=a^{-1}b^{-1}\cdot a^{-1} $ holds for every $a,b\in L$,
we have that every Bol reflection $\sigma_d$ has order $2$. 
In particular, the Bol reflection through the horizontal line containing the unit of $L$ is 
\begin{equation}\label{bolrefl1}
\sigma_1\colon (x,y,xy) \longmapsto \left( xy,\  y^{-1} , \ x \right).
\end{equation}	
Its action consists of swapping the first and third coordinates and inverting the second. 
If we compute the composition $\sigma_1\sigma_d$ we obtain the following map
\begin{equation}\label{sigma1sigmad}
\sigma_1\sigma_d\colon (x,y,xy) \longmapsto \left( xd^{-1}, \ dy\cdot d, \ xy\cdot d  \right),
\end{equation}
which coincides with the autotopism $(R_d^{-1}, \ L_dR_d, \ R_d)$ of the right Bol loop $L$. It is easy to check that for every $d\in L$, $\sigma_d\sigma_1=\sigma_1\sigma_{d^{-1}}$.

Let us consider the sets $\Sigma:=\{\sigma_d, \ \sigma_1\sigma_d \mid d\in L\}$ and  $\Sigma_0:=\{\sigma_d \mid d\in L\}$. $\Sigma_0$ is invariant under conjugation by all collineations that preserves $\mathcal{H}$. For proper Bol loops, these are all the collineations. In general, we can say that $\Sigma_0$ is invariant under the \emph{direction preserving collineations}.
The group $\Gamma$ generated by $\Sigma_0$ is a subgroup of the full group of the collineations, and its subgroup $G:=\langle \sigma_1\sigma_d \mid d\in L \rangle$ is a subgroup of the full group of autotopisms of $L$. The reader is referred to \cite{NagyGaborCollineation} or \cite{FunkNagy} for insights into the collineation group.

\bigskip

\section{Algebraic properties of the extension} \label{sec:extprops}

In this section, $L$ is a right Bol loop, with associated $3$-net $\mathcal{N}$. The vertical and transversal lines are $v_b:x=b$ and $t_c:xy=c$. The sets of transversal and vertical lines of $\mathcal{N}$ are $\mathcal{T}$, $\mathcal{V}$. The Bol reflection on the horizontal line $y=a$ is denoted by $\sigma_a$. 

\begin{lemma}\label{lm:sharp}
The subset
\begin{align*}
\Sigma=\{\sigma_d, \ \sigma_1\sigma_d \mid d\in L\}
\end{align*}
of the group $\Gamma=\langle \sigma_d \mid d\in L \rangle$ is a sharply transitive set on $\mathcal{T}\cup \mathcal{V}$. 
\end{lemma}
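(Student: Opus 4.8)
The plan is to exhibit, for each pair of lines $\ell, \ell' \in \mathcal{T}\cup\mathcal{V}$, a unique element of $\Sigma$ carrying $\ell$ to $\ell'$. First I would record how the Bol reflections act on the transversal and vertical pencils. From \eqref{bolrefl}, the reflection $\sigma_d$ maps the point $(x,y,xy)$ to $(xyd^{-1},\,dy^{-1}d,\,xd)$; reading off the third coordinate shows $\sigma_d$ sends the vertical line $v_x$ to the transversal line $t_{xd}$, hence $\sigma_d\colon \mathcal{V}\to\mathcal{T}$, and being an involution it also sends $\mathcal{T}\to\mathcal{V}$ (indeed $t_c = t_{xy}\mapsto v_{cd^{-1}}$, reading the first coordinate). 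Dually, from \eqref{sigma1sigmad} the autotopism $\sigma_1\sigma_d = (R_d^{-1},L_dR_d,R_d)$ fixes both pencils $\mathcal{T}$ and $\mathcal{V}$: it sends $v_x \mapsto v_{xd^{-1}}$ and $t_c\mapsto t_{cd}$. So the two families $\Sigma_0 = \{\sigma_d\}$ and $G = \{\sigma_1\sigma_d\}$ play complementary roles: the first swaps $\mathcal{T}$ and $\mathcal{V}$, the second preserves each.

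The counting argument then splits into four cases according to whether $\ell,\ell'$ are transversal or vertical. If both are vertical, say $\ell = v_a$ and $\ell' = v_b$, I need the unique $g\in\Sigma$ preserving $\mathcal{V}$ with $v_a^g = v_b$; by the previous paragraph such $g$ must lie in $G$, and $v_a^{\sigma_1\sigma_d} = v_{ad^{-1}} = v_b$ forces $d = b^{-1}a$ uniquely (using that $L$ is a loop, so right division is unique). The case both transversal is identical, $t_c^{\sigma_1\sigma_d} = t_{cd}$ giving $d = c^{-1}c'$. If $\ell=v_a$ is vertical and $\ell'=t_c$ transversal, then the required element must swap the pencils, so it lies in $\Sigma_0$, and $v_a^{\sigma_d} = t_{ad} = t_c$ forces $d = a^{-1}c$ uniquely; symmetrically for $\ell$ transversal and $\ell'$ vertical, $t_c^{\sigma_d} = v_{cd^{-1}} = v_a$ gives $d = a^{-1}c$ as well. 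In each case existence and uniqueness of the solution for $d$ follow from the loop axioms, and in the mixed cases one also checks that no element of the complementary family $G$ (resp.\ $\Sigma_0$) can do the job since it preserves (resp.\ reverses) the pencil types — this is what guarantees uniqueness across all of $\Sigma$, not just within one of its two halves. Finally one should note $1\in\Sigma$ since $\sigma_1\sigma_1 = \mathrm{id}$, so the identity is available (it is the element fixing each line, consistent with $d=1$ in the like-type cases).

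The only genuine subtlety — more bookkeeping than obstacle — is making sure the two descriptions $\Sigma_0$ and $G = \{\sigma_1\sigma_d\}$ are genuinely disjoint as \emph{sets of maps}, so that $|\Sigma| = 2|L| = |\mathcal{T}\cup\mathcal{V}|$ and the "sharply transitive" count is exact; this is clear because an element of $\Sigma_0$ interchanges the two pencils while every element of $G$ fixes them, so the only way a coincidence could occur is if some $\sigma_d$ were the identity, which is excluded for a proper Bol loop (and in the degenerate case the statement is trivially verified directly). With the four cases dispatched, the lemma follows: $\Sigma$ acts sharply transitively on $\mathcal{T}\cup\mathcal{V}$, which is exactly the combinatorial input needed to build the loop folder underlying the extension $\widetilde{L}$.
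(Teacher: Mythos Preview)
Your proposal is correct and follows essentially the same approach as the paper: both arguments compute the action of $\sigma_d$ and $\sigma_1\sigma_d$ on the pencils $\mathcal{T},\mathcal{V}$ from \eqref{bolrefl} and \eqref{sigma1sigmad}, observe that the former swap the two pencils while the latter preserve each, and then solve uniquely for $d$ in each of the four source/target cases using the loop axioms. The paper phrases the mixed case geometrically (the axis of the required reflection passes through $v_a\cap t_c$), whereas you read the action directly off the coordinate formulas and are a bit more explicit about why uniqueness holds across all of $\Sigma$ rather than just within each half; these are presentational differences, not a different method. One small cleanup: your expressions like $d=b^{-1}a$ should strictly be left divisions $d=b\backslash a$ in a general right Bol loop, and your remark that disjointness could only fail ``if some $\sigma_d$ were the identity'' is unnecessary, since the pencil-swapping versus pencil-preserving dichotomy already rules out any overlap.
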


\begin{proof}
Within the 3-net $\mathcal{N}(L)$, for every vertical line $v\in\mathcal{V}$ and transversal line $t\in\mathcal{T}$, there exists a unique horizontal line $h\in\mathcal{H}$, such that the associated Bol reflection interchanges $v$ and $t$. In fact, let $v_a\in\mathcal{V}$ and $t_c\in\mathcal{T}$. The intersection $v_a\cap t_c$ is the unique point $(a,b)$, where $b=a^{-1}c$. For every $y \in L$,
\begin{align*}
(a,\ y,\ ay)^{\sigma_b}=(ay\cdot b^{-1}, \ by^{-1}\cdot b, ab) = (ay\cdot b^{-1}, \ by^{-1}\cdot b, \ c)\in t_c,
\end{align*}
and 
\begin{align*}
(cy^{-1},\ y,\ c)^{\sigma_b}=(cb^{-1}, by^{-1}\cdot b, cy^{-1}\cdot b) = (a, by^{-1}\cdot b, cy^{-1}\cdot b) \in v_a.
\end{align*}
Hence, the Bol reflection $\sigma_b$ through the horizontal line $h_b$ interchanges the points of $v_a$ with the points of $t_c$.

Consequently, any composition $\sigma_1\sigma_b$ induces a permutation on the vertical lines, as well as on the transversal lines. In fact, If we consider now two vertical lines, $v_a$ and $v_c$, the autotopism $\sigma_1\sigma_b=(R_{b^{-1}}, L_bR_b, R_b)$ of $L$, with $b=c^{-1}a$, maps $v_a$ into $v_c$. Similarly, the autotopism $\sigma_1\sigma_b$ of $L$, with $b=a^{-1}c$, maps the transversal line $t_a$ into $t_b$.
\end{proof}

\begin{lemma} \label{lm:faithful}
Let $\widetilde{L}=(\mathcal{T}\cup\mathcal{V},\cdot)$, with product defined in \eqref{eq:extformula}. Then $\widetilde{L}=\lambda(\Gamma, \mathcal{T}\cup\mathcal{V}, t_1)$.
\end{lemma}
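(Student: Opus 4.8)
The plan is to identify the underlying set $\mathcal{T}\cup\mathcal{V}$ of $\widetilde{L}$ with the set of transversal and vertical lines of $\mathcal{N}$ in the obvious way (the copy $t_a$ with the line $t_a$, the copy $v_a$ with the line $v_a$, so that the unit $t_1$ of $\widetilde{L}$ corresponds to the base line $t_1$), and then to check that the loop operation $\ast$ produced by the construction $\lambda$ agrees with \eqref{eq:extformula} on each of the four types of products. By Lemma~\ref{lm:sharp}, $\Sigma$ is a sharply transitive subset of $\Gamma$ on $\mathcal{T}\cup\mathcal{V}$, it contains $\mathrm{Id}=\sigma_1\sigma_1$, and $\Gamma=\langle\Sigma\rangle$; hence $\lambda(\Gamma,\Sigma,t_1)$ is a well-defined loop on $\mathcal{T}\cup\mathcal{V}$ with unit $t_1$, and it is this loop that must be shown equal to $\widetilde{L}$.

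The key preliminary step is to record how the elements of $\Sigma$ permute the \emph{lines} of $\mathcal{N}$ (rather than its points). Reading off from \eqref{bolrefl} and \eqref{sigma1sigmad} the coordinate that stays constant along the image of a line, one gets $\sigma_d(t_c)=v_{cd^{-1}}$, $\sigma_d(v_a)=t_{ad}$, $\sigma_1\sigma_d(t_c)=t_{cd}$, and $\sigma_1\sigma_d(v_a)=v_{ad^{-1}}$. In particular $\sigma_d(t_1)=v_{d^{-1}}$ while $\sigma_1\sigma_d(t_1)=t_d$, so --- using the sharp transitivity from Lemma~\ref{lm:sharp}, together with the fact that no $\sigma_d$ carries $t_1$ to a transversal line and no $\sigma_1\sigma_d$ carries it to a vertical line --- the unique element of $\Sigma$ taking the base line $t_1$ to $t_b$ is $\sigma_1\sigma_b$, and the unique one taking $t_1$ to $v_b$ is $\sigma_{b^{-1}}$.

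Since $p\ast q=p^{s}$ for the unique $s\in\Sigma$ with $t_1^{s}=q$, the four products are now one-line substitutions: $t_a\ast t_b=\sigma_1\sigma_b(t_a)=t_{ab}$, $v_a\ast t_b=\sigma_1\sigma_b(v_a)=v_{ab^{-1}}$, $t_a\ast v_b=\sigma_{b^{-1}}(t_a)=v_{ab}$, and $v_a\ast v_b=\sigma_{b^{-1}}(v_a)=t_{ab^{-1}}$. These coincide with the products in \eqref{eq:extformula}, so $\widetilde{L}=\lambda(\Gamma,\Sigma,t_1)$. The main obstacle is not conceptual but clerical: translating the point-level formulas \eqref{bolrefl}--\eqref{sigma1sigmad} into the induced action on lines and keeping every inverse on the correct side; sharp transitivity, which is the one genuinely structural ingredient, is already supplied by Lemma~\ref{lm:sharp}.
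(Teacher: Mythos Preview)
Your proof is correct and follows essentially the same approach as the paper: both identify, via the line action of the Bol reflections, that the unique elements of $\Sigma$ sending $t_1$ to $t_b$ and to $v_b$ are $\sigma_1\sigma_b$ and $\sigma_{b^{-1}}$ respectively, and then verify the four product types directly. Your version is a bit more explicit about how the line action is read off from \eqref{bolrefl} and \eqref{sigma1sigmad}, but the argument is the same.
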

\begin{proof}
The loop $\lambda(\Gamma, \mathcal{T}\cup\mathcal{V}, t_1)$ is well-defined by Lemma~\ref{lm:sharp}, its unit element is $t_1$. The unique element of $\Sigma$ mapping $t_1$ to $t_d$ and $v_d$ is $\sigma_1\sigma_d$ and $\sigma_{d^{-1}}$, respectively. Hence, the loop operation is
\begin{align*}
t_a \cdot t_b &= t_a^{\sigma_1\sigma_b} = t_{ab},\\
t_a \cdot v_b &= t_a^{\sigma_{b^{-1}}} = v_{ab},\\
v_a \cdot t_b &= v_a^{\sigma_1\sigma_b} = v_{ab^{-1}},\\
v_a \cdot v_b &= v_a^{\sigma_{b^{-1}}} = t_{ab^{-1}}. \qedhere
\end{align*}
\end{proof}

\begin{lemma}
The loop $\widetilde{L}$ has size $|\widetilde{L}|=2|L|$. It contains $\mathcal{T}$ as a normal subloop of index $2$, which is isomorphic to $L$. Every element $v_a$ (a vertical line) has order two.
\end{lemma}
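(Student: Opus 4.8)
The plan is to read off all three assertions directly from the multiplication rules \eqref{eq:extformula}, using Lemma~\ref{lm:faithful} only to know that $\widetilde{L}$ is genuinely a loop with unit $t_1$. The cardinality is immediate: since $\mathcal{T}$ and $\mathcal{V}$ are by construction two disjoint copies of $L$, we have $|\widetilde{L}|=|\mathcal{T}|+|\mathcal{V}|=2|L|$, and $|\mathcal{T}|=|L|$ is exactly half of that, so $\mathcal{T}$ is a subset of index $2$.

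Next I would exhibit $\mathcal{T}$ as the kernel of a loop homomorphism. Define $\phi\colon\widetilde{L}\to\mathbb{Z}/2\mathbb{Z}$ by $\phi(t_a)=0$ and $\phi(v_a)=1$. Inspecting the four product rules in \eqref{eq:extformula} — $t_at_b=t_{ab}$, $t_av_b=v_{ab}$, $v_at_b=v_{ab^{-1}}$, $v_av_b=t_{ab^{-1}}$ — one checks in each case that the $\mathcal{T}/\mathcal{V}$ index recorded by $\phi$ adds modulo $2$; hence $\phi$ is a surjective loop homomorphism with $\ker\phi=\mathcal{T}$. Since the kernel of a loop homomorphism is a normal subloop, $\mathcal{T}$ is a normal subloop of $\widetilde{L}$ of index $2$. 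The rule $t_at_b=t_{ab}$ also shows that $t_a\mapsto a$ is a bijection $\mathcal{T}\to L$ respecting products, hence an isomorphism of loops $\mathcal{T}\cong L$; in particular $\mathcal{T}$ is a subloop, which is anyway implicit in its being a kernel.

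Finally, for the order of $v_a$: the last rule in \eqref{eq:extformula} gives $v_a\cdot v_a=t_{aa^{-1}}=t_1$, the unit of $\widetilde{L}$, while $v_a\neq t_1$ because $\mathcal{V}$ and $\mathcal{T}$ are disjoint; hence $v_a$ is an involution. Here I would be mildly careful not to invoke power associativity of $\widetilde{L}$, which is only available once we know $\widetilde{L}$ is right Bol (Theorem~\ref{thm:Bolcond}); but $v_a\cdot v_a=t_1$ is a single product and needs nothing beyond \eqref{eq:extformula}. There is no real obstacle in this lemma — the only point worth a moment's thought is that index-$2$ normality is obtained most cleanly through the homomorphism $\phi$ rather than by verifying invariance under the inner mapping group directly.
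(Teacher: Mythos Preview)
Your proof is correct and follows the paper's approach: the paper simply states that ``these properties follow from the product formulas,'' and your argument spells this out explicitly, with the homomorphism $\phi\colon\widetilde{L}\to\mathbb{Z}/2\mathbb{Z}$ being a clean way to certify normality of $\mathcal{T}$. Nothing further to add.
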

\begin{proof}
These properties follow from the product formulas. 
\end{proof}

Any equation which holds identically in $\widetilde{L}$, does hold in $L$ as well. For example, if $\widetilde{L}$ is right Bol, then $L$ is right Bol too. The converse is not true in general.

\begin{proposition}\label{prop:extbol}
$\widetilde{L}$ is a right Bol loop if, and only if, $L$ is a right conjugacy closed right Bol loop with the property 
\begin{equation}\label{eq:condition2}
ab\cdot a^{-1}=a^{-1}b\cdot a, \quad \forall \ a,b\in L.
\end{equation}
\end{proposition}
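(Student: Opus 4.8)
The plan is to verify the right Bol identity \eqref{eq:rightbol} directly in $\widetilde L$ by means of the Chein-type product formula recorded just after Theorem~\ref{theoremchein}: every element of $\widetilde L$ is $ax^{\epsilon}$ with $\epsilon\in\{0,1\}$, and $(ax^{\delta})(bx^{\epsilon})=(ab^{(-1)^{\delta}})x^{\delta+\epsilon}$, where the power of $x$ is read modulo $2$. (Equivalently, one could argue inside the folder $\widetilde L=\lambda(\Gamma,\mathcal T\cup\mathcal V,t_1)$ of Lemma~\ref{lm:faithful}; see the closing remark.) Substituting $x\mapsto ax^{\alpha}$, $y\mapsto bx^{\beta}$, $z\mapsto cx^{\gamma}$ into the two sides of \eqref{eq:rightbol} and expanding, the $x$-components on both sides collapse to $x^{\alpha+\gamma}$, so $\widetilde L$ is right Bol exactly when the $L$-components agree for all $a,b,c\in L$ and all parities of $\alpha,\beta,\gamma$. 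Replacing $c$ by a suitable power of itself and collecting the parities into two signs $\epsilon_1,\eta\in\{\pm1\}$, this reduces to the single family of identities
\[
\bigl(((ab^{\epsilon_1})c^{\epsilon_1})b^{\epsilon_1\eta}\bigr)=a\,\bigl((bc)b^{\eta}\bigr)^{\epsilon_1}\qquad(a,b,c\in L;\ \epsilon_1,\eta\in\{\pm1\}),
\]
and I would then treat the four sign choices one at a time.

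For $(\epsilon_1,\eta)=(+,+)$ the identity is literally \eqref{eq:rightbol} in $L$, so $L$ must be a right Bol loop; from now on I assume this and use freely the standard facts $R_x^{-1}=R_{x^{-1}}$ and $(ub\cdot u)^{-1}=u^{-1}b^{-1}\cdot u^{-1}$. For $(\epsilon_1,\eta)=(-,+)$ the identity reads $((ab^{-1})c^{-1})b^{-1}=a\bigl((bc)b\bigr)^{-1}$; rewriting the right-hand side as $a\bigl((b^{-1}c^{-1})b^{-1}\bigr)$ and substituting $b\mapsto b^{-1}$, $c\mapsto c^{-1}$ turns it into \eqref{eq:rightbol} again, so this case is automatic. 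For $(\epsilon_1,\eta)=(+,-)$ the identity reads $((ab)c)b^{-1}=a\bigl((bc)b^{-1}\bigr)$ for all $a,b,c$, that is $R_bR_cR_b^{-1}=R_{(bc)b^{-1}}$; since $\bigl((bc)b^{-1}\bigr)b=bc$ we have $(bc)b^{-1}=bc/b$, so this case is precisely right conjugacy closure. Hence the first three cases together amount to saying that $L$ is a right conjugacy closed right Bol loop.

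The remaining case $(\epsilon_1,\eta)=(-,-)$, namely $((ab^{-1})c^{-1})b=a\bigl((bc)b^{-1}\bigr)^{-1}$, is the crux. Assuming $L$ right Bol and right conjugacy closed, I would read both sides as maps $a\mapsto a^{g}$ with $g$ in the right multiplication group of $L$: the left-hand side is $R_{b^{-1}}R_{c^{-1}}R_b$, and the right-hand side is $R_w$ with $w=\bigl((bc)b^{-1}\bigr)^{-1}$. Using $(bc)b^{-1}=bc/b$ and right conjugacy closure, $R_w=(R_bR_cR_b^{-1})^{-1}=R_bR_{c^{-1}}R_b^{-1}=R_{(bc^{-1})b^{-1}}$, whereas applying right conjugacy closure to $b^{-1}$ gives $R_{b^{-1}}R_{c^{-1}}R_b=R_{(b^{-1}c^{-1})b}$. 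Therefore this case holds for all $a,b,c$ if and only if $(b^{-1}c^{-1})b=(bc^{-1})b^{-1}$ for all $b,c$, which after replacing $c$ by $c^{-1}$ and renaming variables is exactly \eqref{eq:condition2}. Putting the four cases together yields the proposition.

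The single real difficulty is this last case: inverses get threaded through non-associative products, and the device that tames it is to convert the wanted identity into an equality of two right translations by explicit elements — which is available only because the previous cases have already forced right conjugacy closure. The rest is sign bookkeeping, together with the right Bol identities $R_x^{-1}=R_{x^{-1}}$ and $(ub\cdot u)^{-1}=u^{-1}b^{-1}u^{-1}$ recalled in the introduction and Section~\ref{sec:prelim}. If one prefers to work with the folder of Lemma~\ref{lm:faithful}, right-Bolness is the assertion that $\Sigma=\Sigma_0\cup\sigma_1\Sigma_0$ is a twisted subgroup of $\Gamma$: closure under inversion and the two ``pure'' instances of $\sigma\tau\sigma\in\Sigma$ follow formally from $\sigma_d^{2}=\mathrm{id}$, $\sigma_d\sigma_1=\sigma_1\sigma_{d^{-1}}$ and the conjugation-invariance of $\Sigma_0$ under direction-preserving collineations, while the two mixed instances reproduce the computation above.
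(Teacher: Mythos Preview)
Your proof is correct and takes a genuinely different computational path from the paper. The paper works inside the loop folder of Lemma~\ref{lm:faithful}: it identifies the right translations of $\widetilde L$ with $\Sigma=\{\sigma_d,\sigma_1\sigma_d\mid d\in L\}$ and verifies the twisted-subgroup condition by computing the action of the products $\sigma_a\sigma_b\sigma_a$, $\sigma_1\sigma_a\sigma_1\sigma_b\sigma_1\sigma_a$, $\sigma_1\sigma_a\sigma_b\sigma_1\sigma_a$ and $\sigma_a\sigma_1\sigma_b\sigma_a$ on points $(x,y,xy)$ of the $3$-net. You instead expand the Bol identity directly via the Chein-type product formula $(ax^{\delta})(bx^{\epsilon})=(ab^{(-1)^{\delta}})x^{\delta+\epsilon}$, collapse the eight parity cases to four sign cases $(\epsilon_1,\eta)$ by the substitution $c\mapsto c^{\epsilon_2}$, and treat each as an identity in $L$. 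Both arguments isolate the same four conditions in the same logical order --- right Bol, an automatic consequence, right conjugacy closure, and finally \eqref{eq:condition2} --- but your route is more elementary (it needs no $3$-net or Bol-reflection machinery) and makes the dependencies among the four cases more explicit, whereas the paper's geometric calculation feeds directly into the later study of the core via the collineation group $\Gamma=\langle\sigma_d\mid d\in L\rangle$.
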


\begin{proof}
It is known that a loop is a right Bol loop if, and only if, the set of its right translations is a twisted subgroup of the group of right multiplications. In our case, since $\Sigma$ is the set of right translations, we have that $\widetilde{L}$ is a right Bol loop if, and only if, for any $a,b\in L$
\begin{equation}
\big\{ \sigma_a \sigma_b \sigma_a, \ \sigma_1\sigma_a \sigma_1\sigma_b \sigma_1\sigma_a, \ \sigma_1\sigma_a  \sigma_b  \sigma_1\sigma_a, \ \sigma_a \sigma_1\sigma_b  \sigma_a \big\} \subset \Sigma.
\end{equation}
Let us compute $(x,y,xy)^{\sigma_a\sigma_b\sigma_a}$:
\begin{align*}
(x,y,xy)^{\sigma_a\sigma_b\sigma_a}&=\Big(xy\cdot a^{-1}, \ ay^{-1}\cdot a, \ xa\Big)^{\sigma_b\sigma_a}\\
&=\Big(xa\cdot b^{-1}, \ b\left( a^{-1}y\cdot a^{-1} \right)\cdot b, \ \left( xy\cdot a^{-1} \right)b \Big)^{\sigma_a}\\
&=\Big(\left( xy\cdot a^{-1}\right)b\cdot a^{-1}, \ a\left( b^{-1} \left( ay^{-1}\cdot a\right)\cdot b^{-1}\right)\cdot a, \ \left(xa\cdot b^{-1}\right)a \Big)\\
&=\Big( \left(xy\right)\left(ab^{-1}\cdot a\right)^{-1}, \ \left(ab^{-1}\cdot a\right)y^{-1} \cdot \left(ab^{-1}\cdot a\right), \ x\left(ab^{-1}\cdot a\right)\Big).
\end{align*}
Hence we have that
\begin{equation*}
\sigma_a\sigma_b\sigma_a=\sigma_{ab^{-1}\cdot a}\in \Sigma.
\end{equation*}
Furthermore,
\begin{equation*}
\sigma_1\sigma_a\sigma_1\sigma_b\sigma_1\sigma_a=\sigma_1\sigma_a\sigma_{b^{-1}}\sigma_a=\sigma_1\sigma_{ab\cdot a}\in\Sigma.
\end{equation*}

Now, let us compute $(x,y,xy)^{\sigma_1\sigma_a\sigma_b\sigma_1\sigma_a}$:
\begin{align*}
(x,y,xy)^{\sigma_1\sigma_a\sigma_b\sigma_1\sigma_a}&=\Big(xa^{-1}, \ ay\cdot a,\  xy\cdot a\Big)^{\sigma_b\sigma_1\sigma_a}\\
&=\Big( \left(xy\cdot a\right)b^{-1} , \ b\left( a^{-1} y^{-1} \cdot a^{-1} \right) \cdot b,\ xd^{-1} \cdot f \Big)^{\sigma_1\sigma_a}\\
&=\Big( \left(xy\cdot a\right)b^{-1} \cdot a^{-1} , \ a\left(b\left( a^{-1} y^{-1} \cdot a^{-1} \right) \cdot b \right) \cdot a, \ \left(xa^{-1} \cdot b\right)a \Big).
\end{align*}
The composition  $\sigma_1\sigma_a\sigma_b\sigma_1\sigma_a$ belongs to $\Sigma$ exactly when it is equal to $\sigma_\varepsilon$, for a suitable $\varepsilon\in L$, since the first component depends on the product $xy$. By setting $x=1$, we can deduce from the third component that $\varepsilon$ must be equal to $a^{-1}b\cdot a$. Moreover, $L$ needs to satisfy the condition 
\begin{equation*}
R_a^{-1}R_bR_a=R_{a^{-1}b\cdot a}, \quad \forall \ a,b\in L,
\end{equation*}
which means that $L$ must be right conjugacy closed. Furthermore, since the first component must be equal to $xy\cdot \varepsilon^{-1}$, we find that 
\begin{equation}\label{condinv}
(a^{-1}b\cdot a)^{-1}=ab^{-1}\cdot a^{-1}  \quad \forall \ a,b\in L.
\end{equation}
Since a right conjugacy closed loop satisfies the property that for any $a,b$, $(a^{-1}b\cdot a)^{-1}=a^{-1}b^{-1}\cdot a$, we can rewrite the condition \eqref{condinv} as
\begin{equation}\label{condinv2}
ab^{-1}	\cdot a ^{-1}= a^{-1}b^{-1}\cdot b \quad \forall \ a,b\in L.
\end{equation}
Lastly, the second component of $(x,y,xy)^{\sigma_1\sigma_a\sigma_b\sigma_1\sigma_a}$ must be $\varepsilon y^{-1}\cdot \varepsilon$, and by an easy computation it is possible to see that this is equivalent to \eqref{condinv2}. 
Hence, with these properties, for all $a,b\in L$ we have that
\begin{equation*}
\sigma_1\sigma_a\sigma_b\sigma_1\sigma_a=\sigma_{a^{-1}b\cdot a}\in\Sigma,
\end{equation*}
and also 
\begin{equation}
\sigma_a\sigma_1\sigma_b\sigma_a=\sigma_1\sigma_1\sigma_a\sigma_{b}^{-1}\sigma_1\sigma_a=\sigma_1\sigma_{a^{-1}b^{-1}\cdot a}\in\Sigma. \qedhere
\end{equation}
\end{proof}

We can now prove the first part of our first main result, which replaces the equation \eqref{eq:condition2} by the condition $x^2\in Z(L)$ for all $x$. 

\begin{proof}[Proof of the first assertion of Theorem~\ref{thm:Bolcond}]
Let $L$ be a right conjugacy closed right Bol loop. Assume that \eqref{eq:condition2} holds. Then
\begin{equation*}
ab= (bb^{-1})a\cdot b= b(b^{-1}a\cdot b)=b(ba\cdot b^{-1})=b^2a\cdot b^{-1}.
\end{equation*}
Hence, for every $a,b\in L$ we have $ab=b^2a\cdot b^{-1}$, which is equivalent to $ab^2=b^2a$. 
Conversely, if $ab^2=b^2 a$ for every $a,b\in L$, then we have 
\begin{equation*}
ba\cdot b^{-1}=ba\cdot bb^{-2}=(ba\cdot b)b^{-2}=b^{-2}(ba\cdot b)=(b^{-2}b\cdot a ) b= b^{-1}a\cdot b.
\end{equation*}
Hence, since in every right conjugacy closed right Bol loops every square is in the nucleus $N(L)$, \eqref{eq:condition2} is equivalent with the fact that $x^2\in Z(L)$ for all $x$. Proposition~\ref{prop:extbol} implies the first assertion of Theorem~\ref{thm:Bolcond}.
\end{proof}

All of the six non-associative right Bol loops of order $8$ have central squares. In conclusion, if $L$ is any right Bol loop of order $8$, the corresponding $\widetilde{L}$ is a right Bol loop of order $16$. None of the proper Bol loops of order $12$ or $15$ have all squares in the center. As a result, it follows that the corresponding loop $\widetilde{L}$ cannot be a right Bol loop in either of these cases.

The situation changes for the Bol loops of order $16$. Of the $2038$ proper Bol loops $L$ with this size, using the GAP \cite{GAP4} package LOOPS \cite{loops3.4.3}, we found that a significant majority of them, precisely $1940$, satisfy the conditions of Theorem~\ref{thm:Bolcond}. Consequently, the corresponding $\widetilde{L}$ in these cases are in fact Bol loops.

\begin{remark}\label{group}
In this scenario, if $L$ is an abelian group, then $\widetilde{L}$ is the semidirect product $L\rtimes C_2$, where $L\cong \mathcal{T}$, $C_2\cong \{t_1,v_1\}$ and the action on $L$ is $x\mapsto x^{-1}$. This allows us to say that if $L$ is an abelian group, then $\widetilde{L}$ is associative but not necessarily abelian. $\widetilde{L}$ is an abelian group exactly when $L$ is an elementary abelian $2$-group.

While, if $L$ is a right Bol loop of exponent $2$, it is clear from the definition of the operation that $\widetilde{L}$ is simply the direct product $L\times C_2$.
\end{remark}

\begin{proposition}\label{propmoufang} Let $L$ be a right Bol loop with central squares. The following are equivalent:
\begin{enumerate}[(i)]
\item $\widetilde{L}$ is Moufang. \label{propcondmoufang}
\item $\widetilde{L}$ is associative.\label{propcondassoc}
\item $L$ is an abelian group. \label{propcondabel}
\end{enumerate}
\end{proposition}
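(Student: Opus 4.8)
The plan is to establish the cycle (iii) $\Rightarrow$ (ii) $\Rightarrow$ (i) $\Rightarrow$ (iii); only the last implication carries real content. For (iii) $\Rightarrow$ (ii): if $L$ is an abelian group then, by Remark~\ref{group}, $\widetilde{L}$ is the semidirect product $L\rtimes C_2$ with the second factor acting by inversion, and this is a group, hence associative. For (ii) $\Rightarrow$ (i): an associative loop is a group, and every group is Moufang.

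For (i) $\Rightarrow$ (iii) I would first deduce that $L$ is commutative. A Moufang loop has the antiautomorphic inverse property $(pq)^{-1}=q^{-1}p^{-1}$, as recalled in the introduction; I apply it in $\widetilde{L}$ with $p=t_a$ and $q=v_b$. By \eqref{eq:extformula} we have $t_a v_b=v_{ab}$, and since every $v_c$ has order two this equals its own inverse, so the left-hand side is $v_{ab}$; on the right-hand side $v_b^{-1}t_a^{-1}=v_b\,t_{a^{-1}}=v_{ba}$. Hence $v_{ab}=v_{ba}$, that is $ab=ba$ for all $a,b\in L$.

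It then remains to prove the purely loop-theoretic claim that a commutative right Bol loop $L$ with central squares is an abelian group. Being a right Bol loop with central squares, $L$ is right conjugacy closed, so $R_xR_yR_x^{-1}=R_{xy/x}$ for all $x,y\in L$. In a commutative loop one has $xy/x=y$ (the unique $w$ with $wx=xy$ also satisfies $xw=xy$, hence $w=y$), so $R_xR_y=R_yR_x$ for all $x,y$; equivalently $(zx)y=(zy)x$ for all $x,y,z$. Using this together with commutativity, $a(bc)=(bc)a=(ba)c=(ab)c$ for all $a,b,c\in L$, so $L$ is associative, and therefore an abelian group. (Alternatively: the right multiplication group $\langle R_x\mid x\in L\rangle$ is abelian and transitive on $L$, hence acts regularly, so $L$ is isomorphic to it.)

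I expect the only genuine obstacle to be this final step; extracting commutativity in the previous paragraph is a one-line computation with \eqref{eq:extformula}, and the collapse of a commutative right conjugacy closed loop to an abelian group is immediate once the right translations are seen to commute.
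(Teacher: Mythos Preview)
Your proof is correct and follows essentially the same line as the paper's: both arguments extract commutativity of $L$ by applying the antiautomorphic inverse property of Moufang loops to the product $t_a v_b$ in $\widetilde{L}$, obtaining $v_{ab}=v_{ba}$.

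The only noteworthy difference is in the final step. The paper first observes that $L$, being a subloop of the Moufang loop $\widetilde{L}$, is itself Moufang, hence (being right conjugacy closed) an extra loop, and then invokes the fact that a commutative extra loop is an abelian group. You instead stay with the right conjugacy closed property directly: from $R_xR_yR_x^{-1}=R_{xy/x}$ and $xy/x=y$ in a commutative loop you get commuting right translations, hence associativity. Your route is slightly more self-contained, since it avoids the detour through extra loops and does not rely on an external fact about them; the paper's route, on the other hand, places the result in the context of the extra-loop literature. Both are short and clean.
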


\begin{proof}
Of course, for $\widetilde{L}$ to be Moufang, we also need $L$ to be Moufang. Since $L$ is right conjugacy closed, it is an extra loop.

As $\widetilde{L}$ is a right Bol loop, it is Moufang if and only if
\begin{equation}\label{condinverse}
(\ell_1\ell_2)^{-1}=\ell_2^{-1}\ell_1^{-1}
\end{equation}
for every $\ell_1,\ell_2\in \widetilde{L}$. The relation $(ab)^{-1}=b^{-1}a^{-1}$ holds in $L$, so \eqref{condinverse} is trivially satisfied for every pair of transversal lines, as well as for vertical lines. If $\ell_1$ and $\ell_2$ are neither both transversal nor both vertical, say $\ell_1=t_a$ and $\ell_2=v_b$, then 
\begin{equation*}
(t_av_b)^{-1}=v_{ab} \quad \text{and} \quad v_b^{-1}t_a^{-1}=v_{ba}.
\end{equation*}
Hence $(t_av_b)^{-1}=v_b^{-1}t_a^{-1}$ if, and only if, $L$ is commutative. The same condition is obtained by letting $(v_at_b)^{-1}=t_b^{-1}v_a^{-1}$. The equivalence between \eqref{propcondmoufang} and \eqref{propcondabel} follows from the fact that a commutative extra loop is an abelian group. From Remark~\ref{group}, we have the equivalence between \eqref{propcondassoc} and \eqref{propcondabel}.
\end{proof}

This completes the proof of Theorem~\ref{thm:Bolcond}.

\bigskip

\section{Nuclei and center of the extension} \label{sec:nuclei}

Our next aim is to describe the nuclei and center of $\widetilde{L}$.

\begin{proposition}\label{proprightnucleus}
Let $L$ be a right Bol loop with $x^2\in Z(L)$ for every $x\in L$. The right nucleus of $\widetilde{L}$  is
\begin{equation}
N_\rho(\widetilde{L})=\{t_z, \ v_z \mid z\in Z(L) \}.
\end{equation}
\end{proposition}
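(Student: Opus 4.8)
## Proof Proposal

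The plan is to compute the right nucleus of $\widetilde{L}$ directly from the product formula \eqref{eq:extformula}, exploiting the fact that $\mathcal{T}\cong L$ is a subloop and $\mathcal{V}$ consists of involutions. I would begin by observing that $N_\rho(\widetilde{L})$ is a subloop of $\widetilde{L}$, hence it either is contained in $\mathcal{T}$ or meets $\mathcal{V}$; in the latter case it decomposes along the index-$2$ subloop as $\{t_z \mid z\in A\}\cup\{v_z \mid z\in A\}$ for some subset $A\subseteq L$. So the strategy is: (1) determine which $t_z$ lie in $N_\rho(\widetilde{L})$, (2) determine which $v_z$ lie in $N_\rho(\widetilde{L})$, and (3) check consistency.

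For step (1), $t_z\in N_\rho(\widetilde{L})$ means $(\ell_1\ell_2)t_z=\ell_1(\ell_2 t_z)$ for all $\ell_1,\ell_2\in\widetilde{L}$. Running through the four cases for the types of $\ell_1,\ell_2$ using \eqref{eq:extformula}: the $t_a,t_b$ case gives $(ab)z=a(bz)$, i.e. $z\in N_\rho(L)$; the $t_a,v_b$ case gives $v_{(ab)z^{-1}}=v_{a(bz^{-1})}$, again $z^{-1}\in N_\rho(L)$ which is the same condition since $N_\rho(L)$ is a subloop; the $v_a,t_b$ case gives $v_{(ab^{-1})z^{-1}} = v_{a(b^{-1}z^{-1})^{?}}$ — here one must be careful, $v_a(t_b t_z)=v_a t_{bz}=v_{a(bz)^{-1}}=v_{a(z^{-1}b^{-1})}$ while $(v_a t_b)t_z = v_{ab^{-1}}t_z = v_{(ab^{-1})z^{-1}}$, so the condition is $a(z^{-1}b^{-1})=(ab^{-1})z^{-1}$ for all $a,b$; the $v_a,v_b$ case similarly produces $a(z^{-1}b) = (ab^{-1})z^{-1}$ or a companion identity. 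Collecting these, the necessary and sufficient condition should simplify, using that $L$ is right conjugacy closed with central squares (hence all squares central, $R_x^{-1}=R_{x^{-1}}$, AIP may or may not hold), to $z\in Z(L)$. The reverse containment is easy: if $z\in Z(L)$ then $z$ is central in $L$, so all the displayed identities hold trivially, giving $t_z\in N_\rho(\widetilde{L})$.

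For step (2), $v_z\in N_\rho(\widetilde{L})$ means $(\ell_1\ell_2)v_z=\ell_1(\ell_2 v_z)$ for all $\ell_1,\ell_2$. Again four cases: $t_a,t_b$ gives $v_{(ab)z}=v_{a(bz)}$, so $z\in N_\rho(L)$; $t_a,v_b$ gives $t_{(ab)z^{-1}} = t_{a(bz^{-1})}$; $v_a,t_b$ gives $t_{(ab^{-1})z^{-1}} = v_a(t_b v_z) = v_a v_{bz} = t_{a(bz)^{-1}} = t_{a(z^{-1}b^{-1})}$; $v_a,v_b$ gives $t_{ab^{-1}}v_z = v_{(ab^{-1})z}$ versus $v_a(v_b v_z) = v_a t_{bz^{-1}} = v_{a(bz^{-1})^{-1}} = v_{a(zb^{-1})}$. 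These conditions, combined with those from step (1) applied to $t_z$ (note $v_z\in N_\rho$ forces $v_z^2 = t_{z\cdot z^{-1}} = t_1$... wait, $v_z v_z = t_{zz^{-1}} = t_1$, so that gives nothing; but $v_z\in N_\rho$ and $t_1\in N_\rho$ do not immediately give $t_z\in N_\rho$). I would show these force $z\in Z(L)$, and conversely that $z\in Z(L)$ makes all identities hold. A cleaner route for the converse: once $z\in Z(L)$, one checks $v_z$ commutes and associates appropriately using centrality of $z$ in $L$ directly.

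The main obstacle I anticipate is the careful bookkeeping of inverses in the mixed $v$–$t$ and $v$–$v$ cases: the formula $v_a\cdot t_b = v_{ab^{-1}}$ interacts with $(ab)^{-1}=b^{-1}a^{-1}$ (valid in right Bol loops) in a way that is easy to get wrong, and the reductions will repeatedly invoke right conjugacy closure ($R_aR_bR_a^{-1}=R_{ab/a}$, equivalently $(a^{-1}b\cdot a)^{-1}=a^{-1}b^{-1}\cdot a$) and the central-squares hypothesis (so $ab^2=b^2a$, and $ab\cdot a^{-1}=a^{-1}b\cdot a$ by \eqref{eq:condition2}). The key algebraic lemma I would isolate first is: \emph{under the standing hypotheses on $L$, an element $z$ satisfies $a(z^{-1}b)=(ab^{-1})z^{-1}$ for all $a,b\in L$ (and its variants appearing above) if and only if $z\in Z(L)$.} Proving this lemma — essentially reverse-engineering centrality from these two-variable identities — is where the real work lies; the rest is assembling the four cases and noting that the set $\{t_z,v_z\mid z\in Z(L)\}$ is closed under the $\widetilde{L}$-product, confirming it is indeed a (normal) subloop equal to $N_\rho(\widetilde{L})$.
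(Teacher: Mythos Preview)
Your approach is the same as the paper's: check the eight associator cases from the product formula and extract conditions on $z$. Two problems, one minor and one real.

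The minor one: the identity $(ab)^{-1}=b^{-1}a^{-1}$ is \emph{not} valid in right Bol loops in general; it is equivalent to the Moufang property. What you can use instead, and what the paper uses, is that once you know $z\in N_\rho(L)=N_\mu(L)$ (from the $t_a,t_b$ case), you get $(bz^{-1})^{-1}=zb^{-1}$ for nuclear $z$ by shuffling $z$ through the middle.

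The real gap is that you stop exactly where the argument begins. You correctly write down the mixed conditions such as $a(bz^{-1})^{-1}=(ab^{-1})z$, and then defer to a ``key algebraic lemma'' that you neither state precisely nor prove, anticipating you will need right conjugacy closure and central squares. In fact none of that is needed. The paper's reduction is two lines: having $z\in N_\rho(L)$ from the easy cases, the identity $a(bz^{-1})^{-1}=(ab^{-1})z$ becomes $a(bz^{-1})^{-1}=a(b^{-1}z)$, hence $(bz^{-1})^{-1}=b^{-1}z$; combining with $(bz^{-1})^{-1}=zb^{-1}$ gives $zb^{-1}=b^{-1}z$, i.e.\ $z\in C(L)$. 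Then $N_\rho(L)\cap C(L)=Z(L)$ (using $N_\rho=N_\mu$ in right Bol loops) finishes it. The same reduction handles the $v_z$ cases. So your scaffolding is right, but the step you flagged as ``where the real work lies'' is much shorter and more elementary than you expect, and does not use the extra hypotheses you planned to invoke.
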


\begin{proof}
Consider $t_z\in N_\rho(\widetilde{L})$. Since $\mathcal{T}$ is isomorphic to $L$, the equation
\begin{equation*}
t_a(t_bt_z)=(t_at_b)t_z, \quad \forall \ a,b\in L,
\end{equation*}
is equivalent to saying that $z$ belongs to $N_\rho(L)$. One can easily check that requiring $t_a(v_bt_z)=(t_av_b)t_z$ leads to the same condition. Now, by setting
\begin{equation*}
v_a(v_bt_z)=(v_av_b)t_z, \quad \forall \ a,b\in L,
\end{equation*}
we obtain
\begin{equation}\label{condcomm}
a(bz^{-1})^{-1}=(ab^{-1})z, \quad \forall \ a,b\in L,
\end{equation}
which, since $z$ is a right nuclear element of $L$, is equivalent to $a(bz^{-1})^{-1}=a(b^{-1}z)$, which can be further simplified to
\begin{equation}\label{condbr}
(bz^{-1})^{-1}=b^{-1}z, \quad \forall \ b\in L.
\end{equation}
Since again $z$ belongs to $N_\rho(L)$, in $L$ the equation $(bz^{-1})^{-1}=zb^{-1}$ holds for every $b$, allowing us to rewrite \eqref{condbr} as 
\begin{equation*}
zb^{-1}=b^{-1}z, \quad \forall \ b\in L,
\end{equation*}
that is, $z$ belongs to the commutant $C(L)$. In conclusion, we have
\begin{equation*}
t_z\in N_\rho(\widetilde{L}) \iff z\in N_\rho(L)\cap C(L)=Z(L).
\end{equation*}

Let now $v_z\in N_\rho(\widetilde{L})$. Both the conditions 
\begin{equation*}
t_a(v_bv_z)=(t_av_b)v_z \quad \text{and} \quad t_a(t_bv_z) = (t_at_b) v_z, \quad \forall a,b\in L,
\end{equation*}
lead easily to $z\in N_\rho(L)$. The remaining conditions $v_a(v_bv_z)=(v_av_b)v_z$ and $v_a(t_bv_z)=(v_at_b)v_z$ are both equivalent to \eqref{condcomm}, leading to $z\in C(L)$. Therefore, we can say that also for vertical lines it holds
\begin{equation*}
v_z\in N_\rho(\widetilde{L}) \iff z\in N_\rho(L)\cap C(L)=Z(L). \qedhere
\end{equation*}
\end{proof}

While Proposition~\ref{proprightnucleus} provides a complete description of the right nucleus of the Bol loop $\widetilde{L}$, which remains the same for all suitable $L$, the situation is different for the left nucleus, as shown by the next result.

\begin{proposition}\label{propleftnucleus}
Let $L$ be a right Bol loop with $x^2\in Z(L)$ for every $x\in L$. For the left nucleus of $\widetilde{L}$ it holds that $$N_\lambda(\widetilde{L})\cap \mathcal{T}=\{t_n \mid n\in N_\lambda(L) \}\cong N_\lambda(L).$$ Furthermore,
\[N_\lambda(\widetilde{L})\cap \mathcal{V}=\{v_n \mid n=((na)b)(a^{-1}b^{-1}) \quad \forall a,b \in L\}\]
In particular:
\begin{enumerate}[(i)]
\item\label{propfirstcond} if $L$ is a non-abelian group, then
\begin{equation}
N_\lambda(\widetilde{L})=\mathcal{T}\cong L;
\end{equation}
\item\label{propseccond} if $L$ is an AIP loop, then
\begin{equation}
N_\lambda(\widetilde{L})=\{t_n, \ v_n \mid n\in N_\lambda(L) \}.
\end{equation}

\end{enumerate}
\end{proposition}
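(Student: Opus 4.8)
The plan is to test the left-nuclear identity $(\ell\ell_1)\ell_2=\ell(\ell_1\ell_2)$ directly, taking $\ell$ first of the form $t_n$ and then of the form $v_n$, and letting $(\ell_1,\ell_2)$ run through all four pairs in $\{t_a,v_a\}\times\{t_b,v_b\}$; each side is evaluated purely from the product rules \eqref{eq:extformula}. The only structural input I expect to need is that $L$ is right Bol, so that $R_x^{-1}=R_{x^{-1}}$, i.e.\ $(xy)y^{-1}=x$, holds in $L$. A warning to keep in mind throughout: one must not rewrite $(ab)^{-1}$ as $b^{-1}a^{-1}$, since that antiautomorphism law fails in a general right Bol loop — indeed its presence or absence is exactly what will separate the two special cases (i) and (ii).

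For $\ell=t_n$ I would compute $(t_nt_a)t_b$, $(t_nt_a)v_b$, $(t_nv_a)t_b$, $(t_nv_a)v_b$ against their right-associated counterparts. I expect the first two products to collapse to the single requirement $(na)b=n(ab)$ for all $a,b\in L$, and the last two to $(na)b^{-1}=n(ab^{-1})$, which is the same requirement after replacing $b$ by $b^{-1}$. Hence $t_n\in N_\lambda(\widetilde L)$ precisely when $n\in N_\lambda(L)$, and since $a\mapsto t_a$ is a loop isomorphism $L\cong\mathcal{T}$, this yields $N_\lambda(\widetilde L)\cap\mathcal{T}=\{t_n\mid n\in N_\lambda(L)\}\cong N_\lambda(L)$.

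For $\ell=v_n$ I would evaluate $(v_nt_a)t_b$, $(v_nt_a)v_b$, $(v_nv_a)t_b$, $(v_nv_a)v_b$. I expect the first pair each to give the condition $(na^{-1})b^{-1}=n(ab)^{-1}$ for all $a,b$, and the second pair each to give $(na^{-1})b=n(ab^{-1})^{-1}$ for all $a,b$. The crucial observation will be that, inversion being a bijection of $L$, substituting $a^{-1}$ and $b^{-1}$ for $a$ and $b$ turns both of these into the one identity $(na)b=n\,(a^{-1}b^{-1})^{-1}$; multiplying on the right by $a^{-1}b^{-1}$ and using $(xy)y^{-1}=x$ rewrites this as $((na)b)(a^{-1}b^{-1})=n$ for all $a,b\in L$, which is the asserted description of $N_\lambda(\widetilde L)\cap\mathcal{V}$.

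Finally I would read off the two special cases from this identity. If $L$ is a non-abelian group then $(a^{-1}b^{-1})^{-1}=ba$, so the condition becomes $nab=nba$, forcing $ab=ba$ for all $a,b$, which cannot hold; hence $N_\lambda(\widetilde L)\cap\mathcal{V}=\emptyset$, and combined with $N_\lambda(L)=L$ this gives $N_\lambda(\widetilde L)=\mathcal{T}\cong L$. If $L$ is AIP then $(a^{-1}b^{-1})^{-1}=ab$, so the condition becomes $(na)b=n(ab)$, i.e.\ $n\in N_\lambda(L)$, and together with the $t_n$ case this gives $N_\lambda(\widetilde L)=\{t_n,v_n\mid n\in N_\lambda(L)\}$. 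The routine part is the eight product computations, where the only real care is in tracking the inverses introduced by the rules $v_at_b=v_{ab^{-1}}$ and $v_av_b=t_{ab^{-1}}$. The one genuinely non-obvious step — and the heart of the argument — will be recognizing that the two conditions arising from $\ell=v_n$ are each equivalent to the single clean identity $((na)b)(a^{-1}b^{-1})=n$, which is what makes the $\mathcal{V}$-part of the left nucleus tractable and lets cases (i) and (ii) fall out immediately.
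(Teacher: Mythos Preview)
Your proposal is correct and follows essentially the same approach as the paper: direct verification of the left-nuclear identity using the product rules \eqref{eq:extformula}, reducing the $t_n$ case to $n\in N_\lambda(L)$ and the $v_n$ case to the single identity $((na)b)(a^{-1}b^{-1})=n$, then specializing to groups and AIP loops. Your write-up is in fact more explicit than the paper's in laying out all eight associator computations and in spelling out the substitution and right-inverse steps that make the two $v_n$-conditions coincide.
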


\begin{proof}
Since $\mathcal{T}$ is isomorphic to $L$, $$N_\lambda(\widetilde{L})\cap \mathcal{T}\subseteq \{t_n\mid n \in N_\lambda(L)\}\cong N_\lambda(L).$$
The conditions
\begin{equation*}
t_n(v_av_b)=(t_nv_a)v_b, \quad t_n(t_av_b)=(t_nt_a)v_b, \quad t_n(v_at_b)=(t_nv_a)t_b
\end{equation*}
are satisfied for any $a,b\in L$ and $n\in N_\lambda(L)$. Therefore, we can conclude that
\begin{equation*}
N_\lambda(\widetilde{L})\cap \mathcal{T}=\{t_n\mid n \in N_\lambda(L)\}.
\end{equation*}

Consider now $v_n\in N_\lambda(\widetilde{L})$. Equation $v_n(v_av_b)=(v_nv_a)v_b$ is equivalent to
\begin{equation}\label{condvertleft}
n(ab^{-1})^{-1}=(na^{-1})b \quad \forall \ a,b\in L.
\end{equation}

It is easy to see that the remaining associativity conditions 
\begin{equation*}
v_n(t_at_b)=(v_nt_a)t_b, \quad v_n(t_av_b)=(v_nt_a)v_b, \quad v_n(v_at_b)=(v_nv_a)t_b,
\end{equation*}
are all equivalent to \eqref{condvertleft}. Furthermore, \eqref{condvertleft} is equivalent to
\begin{equation}\label{condvertleft2}
n=((na)b)(a^{-1}b^{-1}) \quad \forall a,b\in L.
\end{equation}

If $L$ is a non-abelian group, then \eqref{condvertleft2} is impossible, therefore the left nucleus of $\widetilde{L}$ contains no vertical lines, that is,
\begin{equation*}
N_\lambda(\widetilde{L})\cap \mathcal{V}=\emptyset,
\end{equation*}
and \eqref{propfirstcond} is proved.
Lastly, if $L$ is an AIP loop, then \eqref{condvertleft2} requires $n$ to belong to the left nucleus $N_\lambda(L)$. Hence
\begin{equation*}
N_\lambda(\widetilde{L})\cap \mathcal{V}=\{v_n\mid \in N_\lambda(L)\},
\end{equation*}
which proves the assertion \eqref{propseccond}. 
\end{proof}

Propositions~\ref{proprightnucleus} and~\ref{propleftnucleus} imply Theorem~\ref{thm:nuclei}.

By Proposition~\ref{propleftnucleus} we can see that, differently from the right nucleus, the left nucleus has a less schematic description. We can say that, if $L$ is not AIP and $n$ belongs to $N_\lambda(L)$, then, from the equation \eqref{condvertleft}, $v_n$ cannot be an element of $N_\lambda(\widetilde{L})$. That is, the following holds.
\begin{equation}
N_\lambda(\widetilde{L})\cap \mathcal{V} \subseteq \left\{ v_n \mid n\in L\setminus N_\lambda(L)\right\}.
\end{equation}
Among the $2038$ right Bol loops of size $16$, $1940$ have central squares. We performed an analysis using the GAP package LOOPS \cite{loops3.4.3} and identified $1773$ of them that are neither associative nor AIP loops. Of these, there are $14$ cases where the left nucleus of the corresponding $\widetilde{L}$ has the largest intersection possible with $\mathcal{V}$, that is, $N_\lambda(\widetilde{L})\cap \mathcal{V} = \left\{ v_n \mid n\in L\setminus N_\lambda(L)\right\}$. For example $L=\mathrm{RightBolLoop}(16,181)$ has left nucleus equal to $\{1,3,4,7,9,11,12,15\}$ and $N_\lambda(\widetilde{L})\cap \mathcal{V}$ is $\{v_2, v_5, v_6, v_8, v_{10}, v_{13}, v_{14}, v_{16}\}$.

Let $L$ be a right Bol loop of order $16$ with central squares, assume that $L$ is not AIP, and $\nu(L):=|N_\lambda(L)\cap \mathcal{V}|$. Also, let us denote by $\mu_k$ the size of the family $\{L\mid \nu(L)=k\}$, $k=0,\dots,16$. With LOOPS \cite{loops3.4.3} we find the output listed in Table~\ref{tab:vertnucleus}.
\begin{table}[H]
\centering
\begin{tabular}{c|ccccccccccccccccc}

$k$ & 0 & 1 & 2 & 3 & 4 & 5 & 6 & 7 & 8 & 9 & 10 & 11 & 12 & 13 & 14 & 15 & 16  \\
\hline
$\mu_k$ & 1145 & 0 & 454 & 0 & 160 & 0 & 0 & 0 & 14 & 0 & 0 & 0 & 0 & 0 & 0 & 0 & 0
\end{tabular}
\caption{Number of loops with $\nu(L)=k$ }
\label{tab:vertnucleus}
\end{table}

\bigskip

\begin{proposition}\label{prop:center}
Let $L$ be a right Bol loop with $x^2\in Z(L)$ for every $x\in L$, and let  $Z(\widetilde{L})$ be the center of $\widetilde{L}$. The following assertions hold:
\begin{enumerate}[(i)]
\item if $L$ has not exponent $2$, then
\begin{equation}
Z(\widetilde{L})=\{t_z \mid z\in Z(L) \;\text{and}\; z^2=1 \};
\end{equation}
\item if $L$ has exponent $2$, then
\begin{equation}
Z(\widetilde{L})=\{t_z, \ v_z \mid z\in Z(L) \}.
\end{equation}
\end{enumerate}
\end{proposition}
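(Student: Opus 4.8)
The plan is to compute $Z(\widetilde{L}) = C(\widetilde{L}) \cap N_\lambda(\widetilde{L}) \cap N_\mu(\widetilde{L}) \cap N_\rho(\widetilde{L})$ by combining the already-established descriptions of the nuclei with a direct computation of the commutant. Since $\widetilde{L}$ is a right Bol loop (by the first part of Theorem~\ref{thm:Bolcond}), the right and middle nuclei coincide, so it suffices to intersect $C(\widetilde{L})$, $N_\lambda(\widetilde{L})$ and $N_\rho(\widetilde{L})$. By Proposition~\ref{proprightnucleus}, $N_\rho(\widetilde{L}) = \{t_z, v_z \mid z \in Z(L)\}$, which already forces any central element of $\widetilde{L}$ to have index $z \in Z(L)$; this will do most of the work, and the remaining task is to determine, among such elements, which ones also lie in $C(\widetilde{L})$ and $N_\lambda(\widetilde{L})$.

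First I would compute the commutant. For $t_z$ with $z \in Z(L)$: the identities $t_z t_a = t_{za} = t_{az} = t_a t_z$ hold automatically since $z \in C(L)$, while $t_z v_a = v_{za}$ and $v_a t_z = v_{az^{-1}}$, so $t_z \in C(\widetilde{L})$ iff $za = az^{-1}$ for all $a$, i.e.\ iff $z^2 = 1$ (using $z \in C(L)$). For $v_z$ with $z \in Z(L)$: compute $v_z t_a = v_{za^{-1}}$ versus $t_a v_z = v_{az}$, so commuting with all $t_a$ requires $za^{-1} = az$ for all $a$, equivalently $a^{-1} = z^{-1}az$ for all $a$; since $z$ is central in $L$ this says $a^{-1} = a$ for all $a$, i.e.\ $L$ has exponent $2$. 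One then checks that when $L$ has exponent $2$ the remaining condition $v_z v_a = v_a v_z$ (i.e.\ $t_{za^{-1}} = t_{az^{-1}}$) also holds. So: $C(\widetilde{L}) \cap \{t_z \mid z\in Z(L)\} = \{t_z \mid z \in Z(L),\ z^2 = 1\}$ always, and $C(\widetilde{L}) \cap \{v_z \mid z \in Z(L)\}$ is $\{v_z \mid z\in Z(L)\}$ if $L$ has exponent $2$ and empty otherwise.

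Next I would handle the left nucleus via Proposition~\ref{propleftnucleus}. The transversal elements $t_z$ with $z \in Z(L) \subseteq N_\lambda(L)$ are automatically in $N_\lambda(\widetilde{L})\cap\mathcal{T}$, so for part~(i) (no exponent $2$) the answer is exactly $\{t_z \mid z \in Z(L),\ z^2=1\}$, as the $v$-part of the commutant is already empty. For part~(ii), when $L$ has exponent $2$, every element of $L$ satisfies $x^2 = 1$ so the condition $z^2 = 1$ in the $t$-part is vacuous, and one must check that $v_z$ with $z \in Z(L)$ satisfies the left-nucleus condition \eqref{condvertleft2}, namely $z = ((za)b)(a^{-1}b^{-1})$ for all $a,b$; since $L$ has exponent $2$ it is an AIP loop (indeed commutative if it is also a group, but in any case $x^{-1}=x$ makes \eqref{eq:AIP} trivial), so by the AIP case of Proposition~\ref{propleftnucleus} this holds precisely because $z \in Z(L) \subseteq N_\lambda(L)$. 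Assembling the three intersections then gives $Z(\widetilde{L}) = \{t_z \mid z \in Z(L),\ z^2=1\}$ in case~(i) and $Z(\widetilde{L}) = \{t_z, v_z \mid z \in Z(L)\}$ in case~(ii).

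The main obstacle I anticipate is bookkeeping rather than conceptual: one must be careful that the defining conditions for $C$, $N_\lambda$, $N_\rho$ are each checked against all four product types in \eqref{eq:extformula} (so twelve or sixteen verifications total), and that the simplifications using $z \in Z(L)$, central squares in $L$, and—in case~(ii)—exponent $2$ are applied correctly; in particular one should double-check that in case~(ii) no genuinely new constraint beyond $z \in Z(L)$ survives, i.e.\ that the exponent-$2$ hypothesis really does collapse all the asymmetric conditions (like $za^{-1} = az$) to identities. A secondary point worth stating explicitly is that in case~(i) one should confirm $Z(L)$ genuinely contains elements with $z^2 = 1$ only when appropriate—but since $1 \in Z(L)$ always, $\{t_z \mid z\in Z(L),\ z^2=1\}$ is at least $\{t_1\}$, consistent with $t_1$ being the unit.
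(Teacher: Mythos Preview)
Your proposal is correct and follows essentially the same route as the paper: restrict to $\{t_z,v_z\mid z\in Z(L)\}$ via Proposition~\ref{proprightnucleus}, then compute which of these lie in the commutant by testing $t_z$ against $v_a$ and $v_z$ against $t_a$. The only difference is that you are more explicit than the paper in verifying the left-nucleus membership through Proposition~\ref{propleftnucleus} (invoking the AIP case when $L$ has exponent~$2$), whereas the paper handles this step tersely by asserting $N(\widetilde{L})=\{t_z,v_z\mid z\in Z(L)\}$ in case~(ii) and leaving the $N_\lambda$ check for $t_z$ implicit in case~(i).
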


\begin{proof}
By Proposition~\ref{proprightnucleus}, $Z(\widetilde{L}) \subseteq \{t_z,v_z \mid z\in Z(L)\}$. If $L$ has no exponent $2$, no vertical line can be in the commutant, since the condition $v_zt_a=t_av_z$, for any $a\in L$, is equivalent to $za^{-1}=az$, that is $a=a^{-1}$ for any $a\in L$. Hence the center of $\widetilde{L}$ consists only of transversal lines $t_z$ for some $z\in Z(L)$. The condition $t_zt_a=t_at_z$ is naturally satisfied, and $t_zv_a=v_at_z$ requires that $z^2=1$. 	Hence
\begin{equation*}
Z(\widetilde{L})=\{t_z \mid z\in Z(L) \;\text{and}\; z^2=1 \}.
\end{equation*}
If instead $L$ has exponent $2$, then
\begin{equation*}
N(\widetilde{L})=\{t_z, \ v_z \mid z\in Z(L)\},
\end{equation*}
and with the same arguments we obtain that $Z(L)=N(L)$.
\end{proof}

Propositions~\ref{proprightnucleus},~\ref{propleftnucleus} and~\ref{prop:center} prove Theorem~\ref{thm:nuclei}.

Now we determine when this process can be iterated to produce extensions of higher indices. To do this, we need to find when the extension $\widetilde{L}$ of index two of $L$ is again a Bol loop with central squares. Of course, if $L$ has exponent two this process can be naturally iterated, since in this case $\widetilde{L}$ is $L\times C_2$.

\begin{proposition}
   The right Bol loop $\widetilde{L}$ has central squares if and only if the exponent of $L$ divides $4$.
\end{proposition}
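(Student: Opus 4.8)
The plan is to reduce the claim to the explicit description of $Z(\widetilde{L})$ obtained in Proposition~\ref{prop:center}. First I would record the squares in $\widetilde{L}$, read off directly from \eqref{eq:extformula}: for every $a\in L$ one has $t_a^2=t_{a^2}$ and $v_a^2=v_a\cdot v_a=t_{aa^{-1}}=t_1$. Since $t_1$ is the unit of $\widetilde{L}$, every vertical square is automatically central, so $\widetilde{L}$ has central squares \emph{if and only if} $t_{a^2}\in Z(\widetilde{L})$ for every $a\in L$.

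Because $\widetilde{L}$ is a right Bol loop, Theorem~\ref{thm:Bolcond} forces $L$ to be a right Bol loop with $x^2\in Z(L)$ for all $x$; in particular $a^2\in Z(L)$, so Proposition~\ref{prop:center} is available. I would then distinguish the two cases of that proposition. If $L$ has exponent $2$, then $a^2=1$ and $t_{a^2}=t_1\in Z(\widetilde{L})$ trivially, so $\widetilde{L}$ has central squares and $x^4=1$ holds in $L$. If $L$ does not have exponent $2$, then the first case of Proposition~\ref{prop:center} gives $Z(\widetilde{L})=\{t_z\mid z\in Z(L),\ z^2=1\}$; as $a^2$ already lies in $Z(L)$, the membership $t_{a^2}\in Z(\widetilde{L})$ is equivalent to $(a^2)^2=1$, i.e.\ $a^4=1$. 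Putting the two cases together, $\widetilde{L}$ has central squares precisely when $x^4=1$ for every $x\in L$, which is the statement that the exponent of $L$ divides $4$.

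I do not expect a genuine obstacle here; the argument is essentially bookkeeping on top of Proposition~\ref{prop:center}. The two points that need a little care are observing that the vertical squares collapse to the unit (this is what makes the criterion clean, since otherwise one would have to worry about $v$-squares as well) and checking that the exponent-$2$ case is genuinely absorbed by the uniform condition $x^4=1$. If one preferred a self-contained derivation bypassing Proposition~\ref{prop:center}, one could instead note that for $z\in Z(L)$ the element $t_z$ lies in all three nuclei of $\widetilde{L}$ by Propositions~\ref{proprightnucleus} and~\ref{propleftnucleus} (using that $N_\rho=N_\mu$ in a right Bol loop), while the commutativity relation $t_zv_b=v_bt_z$ holds for every $b$ if and only if $zb=bz^{-1}$ for all $b$, i.e.\ $z^2=1$; applying this with $z=a^2$ reproduces the same conclusion.
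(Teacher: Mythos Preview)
Your argument is correct and follows essentially the same route as the paper: compute the squares from \eqref{eq:extformula}, observe that $v_a^2=t_1$ is automatically central, and then invoke Proposition~\ref{prop:center} to see that $t_{a^2}\in Z(\widetilde{L})$ holds for all $a$ exactly when $a^4=1$. The only difference is that you spell out the exponent-$2$ case and the appeal to Theorem~\ref{thm:Bolcond} a bit more explicitly than the paper does.
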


\begin{proof}
    Let $L$ be a right Bol loop with central squares. As we noticed before, $v_a^2=t_1$ for every $a\in L$. Therefore, we only need to check when the squares of the transversal lines are central. If $L$ has exponent $2$, then this condition holds trivially. Let $\mathrm{exp}(L)\neq 2$. By Proposition \ref{prop:center}, we have that the element $t_a^2=t_{a^2}$ is central if and only if $a^4=1$. 
\end{proof}

\bigskip

\section{The core of the extension of a right Bol loop} \label{sec:extcore}

Now we want to see how to define a \emph{quandle} starting from a Bol loop. A quandle, which is a special case of a rack, is a set with a binary operation satisfying axioms reflecting the three Reidemeister moves (c.f. \cite{Knotentheorie}) in knot theory. Although mainly used to study invariants for knots, quandles hold intrinsic interest as algebraic structures. In particular, the definition of a quandle axiomatizes the properties of conjugation within a group. 

\begin{definition}
A \emph{quandle} $(Q,\triangleleft)$ is a set $Q$ with a binary operation $\triangleleft \colon Q\times Q \longrightarrow Q$ which satisfies the following axioms:
\begin{enumerate}
\item for all $a\in Q$, $a\triangleleft a=a$ (idempotence);
\item for all $a,b\in Q$, there exists a unique $x\in Q$ such that $x\triangleleft a=b$;
\item for all $a,b,c\in Q$, $(a\triangleleft b)\triangleleft c= (a\triangleleft c)\triangleleft (b\triangleleft c)$ (right self-distributivity).
\end{enumerate}
\end{definition}
In other words, a quandle can be described as a right quasigroup $Q$ where the right translations are automorphism of $Q$, with the element $a$ itself being fixed by $R_a$.

\begin{example}
If $(G,\cdot)$ is a group, the conjugation $a\triangleleft b:= b^{-1}ab$ gives a quandle structure. 
\end{example}

A quandle $Q$ is said \emph{involutory} if $(a\triangleleft b)\triangleleft b=a$ for every $a,b\in Q$, and it is said \emph{connected} if the right multiplication group 
\begin{equation*}
\mathrm{RMlt}(Q)=\langle R_b\mid b\in Q, \ a^{R_b}=a\triangleleft b \rangle
\end{equation*}
acts transitively on $Q$. Connected quandles are the main objects in \cite{connectedquandles}, where the authors present a correspondence between with certain configurations in transitive groups, called quandle envelopes. 

If $(L,\cdot)$ is a loop, the \emph{core} $(L,+)$ of $L$ is defined by the operation
\begin{equation}
x+y:=(yx^{-1}) y.
\end{equation}
If $L$ is a right Bol loop, its core is an involutory quandle (see \cite{stanovsky}).

If $L$ is a right Bol loop with central squares, we define the core of $\widetilde{L}$ in the same way, denoting it with the same symbol $+$.

\begin{proof}[Proof of Theorem~\ref{thm:core}]
Since the subloop $\mathcal{T}$ of $\widetilde{L}$ is $L$, $t_a+t_b=t_{a+b}$. Moreover,
\begin{equation*}
v_a+v_b=v_bv_a\cdot v_b= t_{ba^{-1}}v_b=v_{ba^{-1}\cdot b}=v_{a+b}.
\end{equation*}
Hence, the core of $\widetilde{L}$ decomposes into the disjoint union of two cores $\mathcal{T}$ and $\mathcal{V}$ both isomorphic to the core of $L$. 
\end{proof}

Furthermore, for the mixed computations it holds
\begin{align} \label{eq:tildecore}
t_a+v_b&=t_{ba\cdot b^{-1}}, & v_a+t_b&=v_{ba\cdot b^{-1}}.
\end{align}

We note here that the equation $t_a+\ell=t_{ba\cdot b^{-1}}$ has $v_b$ and $v_{b^{-1}}$ as solutions, so in general the quandle $(\widetilde{L},+)$ is not a quasigroup.

\begin{definition}
The \emph{structure group} of a quandle $(Q,\triangleleft)$ is
\begin{equation*}
\STR(Q,\triangleleft)=\langle g_a, \ a\in Q \mid g_a g_b= g_b g_{a\triangleleft b}, \ a,b\in Q \rangle.
\end{equation*}
\end{definition}

The structure group of a finite quandle is either free abelian of rank $r$, where $r$ is the number of orbits of $Q$ with respect to all the right translations, or non-abelian and with torsion. In the latter case, $\STR(Q,\triangleleft)$ has a finite index free abelian subgroup of rank $r$; see \cite{abelianquandles} for more details.

If $L$ is a Bol loop, then the idempotency of the core implies 
\[g_a=g_{(a+b)+b}=g_b^{-1} g_{a+b} g_b = g_b^{-2}g_ag_b^2.\]
Hence, $g_b^2 \in Z(\STR(L,+))$ for all $b\in L$. Moreover,
\[g_{a+b}^2=(g_b^{-1}g_ag_b)^2=g_b^{-1}g_a^2g_b = g_a^2.\]
Let $a_1,\ldots,a_r$ be orbit representatives of the right translation group of the core. The subgroup $T=\langle g_{a_1}^2,\ldots,g_{a_r}^2 \rangle$ is a free abelian of rank $r$, which is contained in the center of the structure group. As shown above, $T$ contains every $g_a^2$, $a\in L$. We call the factor $\STR(L,+)/T$ the \textit{restricted structure group}
\begin{equation*}
\rSTR(L,+)=\langle \hat{g}_a, \ a\in L \mid  \hat{g}_b\hat{g}_a \hat{g}_b=\hat{g}_{a+ b}, \ \hat{g}_a^2=1\ a,b\in L \rangle.
\end{equation*}
The Bol reflections $\sigma_a$ satisfy $\sigma_a^2=\mathrm{Id}$ and $\sigma_{a+b}=\sigma_b\sigma_a\sigma_b$. Thus, $\hat{g}_a\longmapsto \sigma_a$ extends to a surjective homomorphism from the restricted structure group onto the collineation group 
\[\Gamma=\langle \sigma_a \mid a\in L \rangle\] 
generated by Bol reflections. 

\begin{lemma} \label{lm:rankGG'}
Let $L$ be a right Bol loop, and denote by $r$ the number of orbits of the right multiplication group of the core. Write $G=\rSTR(L,+)$. Then $G/G'\cong C_2^r$.  
\end{lemma}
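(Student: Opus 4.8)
The plan is to compute the abelianization of $G=\rSTR(L,+)$ directly from its presentation. Writing $G = \langle \hat g_a, a\in L \mid \hat g_b\hat g_a\hat g_b = \hat g_{a+b},\ \hat g_a^2=1\rangle$, the abelianization $G/G'$ is the abelian group on generators $\bar g_a$ ($a\in L$) subject to $\bar g_a^2=1$ and, after abelianizing the relation $\hat g_b\hat g_a\hat g_b=\hat g_{a+b}$, to $2\bar g_b + \bar g_a = \bar g_{a+b}$ in additive notation, i.e. $\bar g_{a+b} = \bar g_a$ (since $2\bar g_b=0$). Thus $G/G'$ is the elementary abelian $2$-group on the generators $\bar g_a$ modulo identifying $\bar g_a$ with $\bar g_{a+b}$ for all $a,b$. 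First I would observe that the relation $\bar g_a = \bar g_{a+b}$ holds for all $b$ precisely means $\bar g_a$ depends only on the orbit of $a$ under the right multiplication group $\mathrm{RMlt}(L,+)$ of the core (generated by the maps $a\mapsto a+b$). Hence $G/G'$ is the free elementary abelian $2$-group on the set of these orbits, which by definition has $r$ of them, giving $G/G' \cong C_2^r$.

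The one subtlety to be careful about is that identifying generators within an orbit must be shown to be the \emph{only} relation, i.e. that there is no further collapse and distinct orbits really give independent generators. For this I would exhibit, for each orbit, a homomorphism $G \to C_2$ detecting it: fix an orbit $\mathcal O$ and send $\hat g_a \mapsto 1$ if $a\in\mathcal O$ and $\hat g_a\mapsto 0$ otherwise. This is well-defined because both defining relations are respected — $\hat g_a^2=1$ is automatic in $C_2$, and in the relation $\hat g_b\hat g_a\hat g_b=\hat g_{a+b}$ the left side has image equal to the image of $\hat g_a$ (the two $\hat g_b$ contributions cancel mod $2$), while the right side has the image of $\hat g_{a+b}$, and $a\in\mathcal O \iff a+b\in\mathcal O$ since $a$ and $a+b$ lie in the same orbit. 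These $r$ homomorphisms together separate the images of generators from distinct orbits, so $G/G'$ surjects onto $C_2^r$; combined with the presentation computation above, which shows $G/G'$ is generated by at most $r$ elements each of order dividing $2$, we conclude $G/G'\cong C_2^r$.

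I do not anticipate a genuine obstacle here; the argument is essentially a bookkeeping exercise on the presentation. The only place requiring a word of care is the claim that abelianizing the relation $\hat g_b\hat g_a\hat g_b=\hat g_{a+b}$ kills the $\hat g_b$-dependence because of $\hat g_a^2=1$ — one should note the two facts are used together, and that the resulting relation $\bar g_a = \bar g_{a+b}$ is exactly the orbit-equivalence on indices. Everything else (well-definedness of the detecting homomorphisms, counting orbits) is routine, using only the idempotent/involutory identities of the core recalled in the excerpt.
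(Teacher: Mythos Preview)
Your proof is correct and follows essentially the same strategy as the paper: both establish the lower bound by exhibiting a surjection $G\to C_2^r$ (you build it coordinatewise, the paper in one stroke), and both get the upper bound by showing that generators indexed by elements in the same core orbit become equal in $G/G'$ (you abelianize the relation directly, the paper phrases it via conjugacy, which amounts to the same computation).
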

\begin{proof}
Let $a_1,\ldots,a_r$ be orbit representatives and let $h_1,\ldots,h_r$ be the free generators of the elementary abelian $2$-group $C_2^r$. For an element $a$ in the orbit of $a_i$, we define the map $\Phi:\hat{g}_a \to h_i$. $\Phi$ extends to a surjective homomorphism $G\to C_2^r$. Hence, $G/G'$ is elementary abelian of rank at least $r$. If $a,b\in L$ are in the same orbit, then $\hat{g}_a^t=\hat{g}_b$ for some $t\in \rSTR(L,+)$. This implies $\hat{g}_a\hat{g}_b \in G'$, or equivalently $\hat{g}_aG'=\hat{g}_b G'$. Therefore, the rank of $G/G'$ cannot be more than $r$. 
\end{proof}

We finish this section by computing the restricted structure group of the cores of right Bol loops in some cases. 

\begin{proposition} \label{prop:rSRTdirprod}
Let $L$ be a right Bol loop, and denote by $r$ the number of orbits of the right multiplication group of the core. Then
\[\rSTR(L\times C_2,+) \cong \rSTR(L,+) \times C_2^r.\]
\end{proposition}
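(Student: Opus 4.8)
The plan is to analyze the core of $\widetilde{M}:=L\times C_2$ explicitly and identify the generators and relations of $\rSTR(\widetilde{M},+)$ with those of $\rSTR(L,+)\times C_2^r$. First I would record that since $L$ has exponent dividing... no, rather: $L\times C_2$ has the same structure as the extension $\widetilde{L}$ only when $L$ has exponent $2$; in general $L\times C_2$ is just the direct product, so its core is the direct product of the core of $L$ with the (two-element, trivial) core of $C_2$. Concretely, writing elements of $\widetilde{M}$ as pairs $(a,\varepsilon)$ with $a\in L$, $\varepsilon\in C_2=\{0,1\}$, the core operation is $(a,\varepsilon)+(b,\delta)=(a+b,\varepsilon)$ because $C_2$ has exponent $2$ and hence its core is trivial: $\varepsilon+\delta=\delta-\varepsilon+\delta=\varepsilon$. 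So $\widetilde{M}$ with its core is the disjoint union of two copies $L_0=L\times\{0\}$ and $L_1=L\times\{1\}$, each a copy of the core of $L$, and there is no interaction between the two copies under $+$ (unlike in $\widetilde{L}$, where $t_a+v_b$ mixes the halves).

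Next I would read off the orbits of the right multiplication group of $(\widetilde{M},+)$. Since the right translation by $(b,\delta)$ acts on $(a,\varepsilon)$ as $(a+b,\varepsilon)$, it fixes the second coordinate, so each half $L_\varepsilon$ is invariant, and within $L_\varepsilon$ the right translations act exactly as those of the core of $L$. Hence the orbits of $(\widetilde{M},+)$ are precisely the orbits of the core of $L$, doubled: if $a_1,\dots,a_r$ are orbit representatives for the core of $L$, then $(a_1,0),\dots,(a_r,0),(a_1,1),\dots,(a_r,1)$ are orbit representatives for $(\widetilde{M},+)$, giving $2r$ orbits. By Lemma~\ref{lm:rankGG'} this already tells us $\rSTR(\widetilde{M},+)/\rSTR(\widetilde{M},+)'\cong C_2^{2r}$, consistent with the claimed right-hand side $\rSTR(L,+)\times C_2^r$, whose abelianization is $C_2^r\times C_2^r$ (using that $\rSTR(L,+)/\rSTR(L,+)'\cong C_2^r$ again by Lemma~\ref{lm:rankGG'}).

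Now for the isomorphism itself. Write $\widehat g_{(a,\varepsilon)}$ for the generators of $G:=\rSTR(\widetilde{M},+)$. The defining relations are $\widehat g_{(b,\delta)}\widehat g_{(a,\varepsilon)}\widehat g_{(b,\delta)}=\widehat g_{(a+b,\varepsilon)}$ and $\widehat g_{(a,\varepsilon)}^2=1$. The key observation is that the relation relating a generator in $L_0$ to one in $L_1$ is $\widehat g_{(b,1)}\widehat g_{(a,0)}\widehat g_{(b,1)}=\widehat g_{(a+b,0)}$ and its mirror $\widehat g_{(b,0)}\widehat g_{(a,1)}\widehat g_{(b,0)}=\widehat g_{(a+b,1)}$; because the two halves do not mix under $+$, these say exactly that conjugation by the $L_1$-generators permutes the $L_0$-generators by the same core-action as the $L_0$-generators do among themselves, and vice versa. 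I would introduce new generators $x_a:=\widehat g_{(a,0)}$ and $y_a:=\widehat g_{(a,0)}\widehat g_{(a,1)}$ (for $a$ ranging over $L$, or over orbit representatives and their orbits). One checks $y_a^2=1$ (since both factors are involutions — careful: $\widehat g_{(a,0)}$ and $\widehat g_{(a,1)}$ need not commute, so I must verify $(\widehat g_{(a,0)}\widehat g_{(a,1)})^2=1$ directly from the relations; this follows because $\widehat g_{(a,1)}\widehat g_{(a,0)}\widehat g_{(a,1)}=\widehat g_{(a+a,0)}=\widehat g_{(a,0)}$ by idempotency of the core, so $\widehat g_{(a,0)}$ and $\widehat g_{(a,1)}$ commute after all). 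Then I would show: the $x_a$ generate a copy of $\rSTR(L,+)$; the $y_a$ generate a central (or at least normal, elementary abelian) subgroup on which the core-action of $L$ is trivial because conjugation of $y_a$ by any $\widehat g_{(b,\delta)}$ sends it to $y_{a+b}$ but one shows all $y_a$ in a common orbit are in fact equal — hence $\langle y_a\rangle\cong C_2^r$; and the two subgroups commute and intersect trivially, giving the direct product. The inverse map sends the generator $x_a$ of $\rSTR(L,+)$ to $\widehat g_{(a,0)}$ and the $i$-th generator of $C_2^r$ to $y_{a_i}$, and one checks it respects all relations. The main obstacle, and the only place real care is needed, is establishing that the "difference" generators $y_a=\widehat g_{(a,0)}\widehat g_{(a,1)}$ (i) are genuinely involutions, (ii) are centralized by everything, and (iii) collapse to only $r$ independent ones rather than $|L|$ — all three hinge on exploiting idempotency $a+a=a$ of the core together with the fact that the $C_2$-coordinate is untouched by $+$; once that bookkeeping is done, the splitting $G\cong\langle x_a\rangle\times\langle y_a\rangle$ and the identification of the two factors is a routine von Dyck / universal-property argument comparing presentations.
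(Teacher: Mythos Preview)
Your proposal is correct and follows essentially the same route as the paper: the paper also introduces the ``difference'' elements $c_b=\hat g_{b,0}\hat g_{b,1}$ (your $y_b$), observes from the relation $\hat g_{b,j}\hat g_{a,i}\hat g_{b,j}=\hat g_{a+b,i}$ that $\hat g_{b,0}$ and $\hat g_{b,1}$ act identically by conjugation so that $c_b$ is central, computes $c_b=c_{b+a}$ to collapse the $c_b$ to one per orbit, and then invokes Lemma~\ref{lm:rankGG'} to conclude that $\langle c_b\rangle\cong C_2^r$. One small remark: your points (ii) and (iii) are really the same statement (centrality of $y_a$ is exactly $y_a=y_{a+b}$ for all $b$, i.e.\ constancy on orbits), and the key input is not idempotency but the fact that the second coordinate $j$ does not appear on the right-hand side of the defining relation---which you do note.
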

\begin{proof}
We denote the generators of $\rSTR(L\times C_2,+)$ by $\hat{g}_{a,i}$ with $a\in L$, $i\in \mathbb{F}_2$. $\hat{g}_{b,j}\hat{g}_{a,i}\hat{g}_{b,j}=\hat{g}_{a+b,i}$ implies that $\hat{g}_{b,0},\hat{g}_{b,1}$ have the same action on all generators. This means that $c_b=\hat{g}_{b,0}\hat{g}_{b,1}$ is in the center, and
\[\rSTR(L\times C_2,+) \cong \rSTR(L,+) \times \langle c_b \mid b \in L \rangle.\]
On the one hand, $c_b^2=1$. On the other hand,
\[c_b=\hat{g}_{a,0}c_b\hat{g}_{a,0} = \hat{g}_{a,0}\hat{g}_{b,0}\hat{g}_{b,1}\hat{g}_{a,0} = \hat{g}_{b+a,0}\hat{g}_{b+a,1} = c_{b+a}.\]
These show that $\langle c_b \mid b \in L \rangle$ is an elementary abelian $2$-group 
of rank at most $r$. Lemma~\ref{lm:rankGG'} implies that the rank is equal to $r$. 
\end{proof}

As for a right Bol loop $L$ of exponent $2$, the extension $\widetilde{L}$ is simply the direct product $L\times C_2$, the following result is immediate.

\begin{corollary} \label{coro:exp2core}
Let $L$ be a finite right Bol loop of exponent $2$. Then
\[\rSTR(\widetilde{L},+)\cong \rSTR(L,+)  \times C_2^r, \]
where $r$ is the number of orbits of the right multiplication group of the core.
\end{corollary}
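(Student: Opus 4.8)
The plan is to combine two facts established earlier in the paper. By Remark~\ref{group} (reiterated just before the corollary statement), if $L$ is a right Bol loop of exponent $2$, then the extension $\widetilde{L}$ is not merely Bol but is literally the direct product $L\times C_2$. Hence $\rSTR(\widetilde{L},+) = \rSTR(L\times C_2,+)$, and Proposition~\ref{prop:rSRTdirprod} applies directly, giving $\rSTR(L\times C_2,+)\cong \rSTR(L,+)\times C_2^r$ where $r$ is the number of orbits of the right multiplication group of the core of $L$. The only thing that needs a word of care is that the integer $r$ appearing in Proposition~\ref{prop:rSRTdirprod} is the orbit count for the core of $L$ itself, which is exactly the $r$ named in the corollary's statement; no recomputation for $L\times C_2$ is needed because Proposition~\ref{prop:rSRTdirprod} is stated in terms of the orbit count of $L$.

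Concretely, I would write: since $L$ has exponent $2$, the product formulas \eqref{eq:extformula} specialise (every $a^{-1}=a$) so that $v_a\cdot t_b=v_{ab}$ and $v_a\cdot v_b=t_{ab}$, which is precisely the multiplication table of $L\times C_2$ with $\mathcal{T}$ corresponding to $L\times\{0\}$ and $\mathcal{V}$ to $L\times\{1\}$; this is the content of the second paragraph of Remark~\ref{group}. Therefore $\widetilde{L}\cong L\times C_2$ as loops, hence their cores are isomorphic involutory quandles, hence their structure groups and restricted structure groups are isomorphic. Now invoke Proposition~\ref{prop:rSRTdirprod} with this $L$ to conclude $\rSTR(\widetilde{L},+)\cong\rSTR(L\times C_2,+)\cong\rSTR(L,+)\times C_2^r$.

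There is essentially no obstacle here: the corollary is a genuine corollary, a one-line deduction from a proposition plus a previously recorded structural identification. The only mild subtlety worth flagging is the hypothesis ``finite'': it is needed only so that the phrase ``number of orbits of the right multiplication group of the core'' refers to a finite number $r$ and so that $C_2^r$ is a finite group matching the conventions under which $\rSTR$ was discussed (the structure-group dichotomy quoted from \cite{abelianquandles} is stated for finite quandles). Since Proposition~\ref{prop:rSRTdirprod} itself does not explicitly assume finiteness, one could even drop it, but keeping it is harmless and keeps $r$ well-defined. I would therefore present the proof in two sentences: identify $\widetilde{L}$ with $L\times C_2$ via Remark~\ref{group}, then apply Proposition~\ref{prop:rSRTdirprod}.
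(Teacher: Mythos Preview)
Your proposal is correct and matches the paper's own argument exactly: the paper simply observes that for $L$ of exponent $2$ one has $\widetilde{L}\cong L\times C_2$ (Remark~\ref{group}) and then declares the corollary immediate from Proposition~\ref{prop:rSRTdirprod}. Your additional remarks about the role of $r$ and the finiteness hypothesis are accurate and harmless elaborations, but the core deduction is the same one-line application.
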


The restricted structure group can also be computed for another class, which is rather trivial from the point of view of the theory of loops, namely for the class of abelian groups. However, in this case, the formula for the restricted structure group is surprisingly different.

\begin{proposition}
Let $L$ be an abelian group. Then 
\[\rSTR(\widetilde{L},+)\cong \rSTR(L,+)  \times \rSTR(L,+). \]
\end{proposition}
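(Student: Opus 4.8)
The plan is to argue directly from the presentation of the restricted structure group, exploiting the fact that for an abelian group $L$ the \emph{mixed} values of the core of $\widetilde L$ collapse. First I would record the core of $\widetilde L$. By Theorem~\ref{thm:core} one has $t_a+t_b=t_{a+b}$ and $v_a+v_b=v_{a+b}$, where $+$ on the right denotes the core of $L$, while \eqref{eq:tildecore} gives $t_a+v_b=t_{ba\cdot b^{-1}}$ and $v_a+t_b=v_{ba\cdot b^{-1}}$. Since $L$ is an abelian group, $ba\cdot b^{-1}=a$, so the mixed formulas simplify to $t_a+v_b=t_a$ and $v_a+t_b=v_a$; that is, in the core $(\widetilde L,+)$ every right translation by a vertical element fixes all transversal elements, and vice versa.

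Next I would translate this into the defining relations of $G:=\rSTR(\widetilde L,+)$, which is generated by the $\hat g_\ell$, $\ell\in\widetilde L$, subject to $\hat g_m\hat g_\ell\hat g_m=\hat g_{\ell+m}$ and $\hat g_\ell^2=1$. Splitting each relator according to the types of its two indices: when both are transversal we obtain $\hat g_{t_b}\hat g_{t_a}\hat g_{t_b}=\hat g_{t_{a+b}}$ and $\hat g_{t_a}^2=1$; when both are vertical we obtain $\hat g_{v_b}\hat g_{v_a}\hat g_{v_b}=\hat g_{v_{a+b}}$ and $\hat g_{v_a}^2=1$; and the mixed relators, using $\hat g_{v_b}^2=1$ (resp. $\hat g_{t_b}^2=1$), reduce exactly to the commutation relations $[\hat g_{t_a},\hat g_{v_b}]=1$ for all $a,b\in L$. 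Thus $G$ is presented on the disjoint union of the two families $\{\hat g_{t_a}\mid a\in L\}$ and $\{\hat g_{v_a}\mid a\in L\}$, subject to the transversal relations, the vertical relations, and these commutation relations between the two families.

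I would then conclude with the standard fact that such a presentation describes a direct product, made explicit by a pair of mutually inverse homomorphisms between $G$ and $\rSTR(L,+)\times\rSTR(L,+)$: send $\hat g_{t_a}\mapsto(\hat g_a,1)$ and $\hat g_{v_a}\mapsto(1,\hat g_a)$ in one direction, and $(\hat g_a,1)\mapsto\hat g_{t_a}$, $(1,\hat g_a)\mapsto\hat g_{v_a}$ in the other. Each map is well defined because every defining relator of its domain is sent to a consequence of the relators of its codomain: the two coordinate copies of $\rSTR(L,+)$ commute elementwise, matching the mixed commutation relations; and under the bijections $a\leftrightarrow t_a$ and $a\leftrightarrow v_a$ the transversal, respectively vertical, relations are literally the defining relations of $\rSTR(L,+)$, since by Theorem~\ref{thm:core} the subquandles $\mathcal T$ and $\mathcal V$ of the core are each isomorphic to the core of $L$. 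As the two maps act inversely to one another on generators, they are mutually inverse, so $G\cong\rSTR(L,+)\times\rSTR(L,+)$.

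I do not expect a serious obstacle: the only point requiring care is the bookkeeping with the presentation — verifying that splitting the index set of the generators loses and misclassifies no relator, and that the mixed relators genuinely collapse to commutation relations via $\hat g_{v_b}^2=1$. The remainder is the routine check that the two displayed assignments respect relators. (Note that, in contrast with Proposition~\ref{prop:rSRTdirprod}, here $\widetilde L=L\rtimes C_2$ rather than $L\times C_2$, which is exactly why the factor $C_2^r$ is replaced by a second copy of $\rSTR(L,+)$.)
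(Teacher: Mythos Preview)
Your proposal is correct and follows essentially the same route as the paper: both use that for abelian $L$ the mixed core values in \eqref{eq:tildecore} collapse to $t_a+v_b=t_a$, $v_a+t_b=v_a$, so the two families of generators commute and the group splits as a direct product of two copies of $\rSTR(L,+)$. Your version is simply more explicit about the presentation bookkeeping and the pair of inverse homomorphisms, whereas the paper argues more tersely via the subgroups $G_T=\langle \hat g_{t_a}\rangle$ and $G_V=\langle \hat g_{v_a}\rangle$.
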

\begin{proof}
Define the subgroups
\begin{align*}
G_T&=\langle \hat{g}_{t_a} \mid a \in L \rangle, \\
G_V&=\langle \hat{g}_{v_a} \mid a \in L \rangle
\end{align*}
of $\rSTR(\widetilde{L},+)$. Clearly $\rSTR(\widetilde{L},+) = \langle G_T,G_V \rangle$, and $G_T\cong G_V \cong \rSTR(L,+)$. \eqref{eq:tildecore} implies that $\hat{g}_{v_b}$ commutes with $G_T$, and $\hat{g}_{t_b}$ commutes with $G_V$. The claim $\rSTR(\widetilde{L},+) \cong G_T\times G_V$ follows.
\end{proof}

\section{Acknowledgement}
This research was supported by project TKP2021-NVA-09, implemented with the support provided by the Ministry of Innovation and Technology of Hungary from the National Research, Development and Innovation Fund, financed under the TKP2021-NVA funding scheme. Partially supported by the NKFIH-OTKA Grant SNN 132625.

\bigskip

\bigskip

\noindent \textbf{Conflict of Interest Statement:}\\
The authors declare that there are no financial or non-financial conflicts of interest related to the publication of this article.

\printbibliography

\end{document}